\newtheorem{theorem}{Theorem}[section]
\newtheorem{lemma}[theorem]{Lemma}
\theoremstyle{definition}
\newtheorem{definition}[theorem]{Definition}
\newtheorem{corollary}[theorem]{Corollary}
\newtheorem{comment}[theorem]{Comment}
\newtheorem{conjecture}[theorem]{Conjecture}
\newtheorem{proposition}[theorem]{Proposition}
\theoremstyle{remark}
\numberwithin{equation}{section}
\begin{document}

\title{Efficient prime counting\\ and the Chebyshev primes}

%    Information for first author
\author{Michel Planat}
%    Address of record for the research reported here
\address{Institut FEMTO-ST, CNRS, 32 Avenue de l'Observatoire, F-25044 Besan\c con, France.}
%    Current address
%\curraddr{Department of Mathematics and Statistics,
%Case Western Reserve University, Cleveland, Ohio 43403}
\email{michel.planat@femto-st.fr}
%    \thanks will become a 1st page footnote.
%\thanks{The first author was supported in part by NSF Grant \#000000.}

%    Information for second author
\author{Patrick Sol\'e}
\address{Telecom ParisTech, 46 rue Barrault, 75634 Paris Cedex 13, France.}
\curraddr{MECAA, King Abdulaziz University, Jeddah, Saudi Arabia.}
\email{sole@telecom-paristech.fr}
%\thanks{Support information for the second author.}

%    General info
\subjclass[2000]{Primary 11N13, 11N05; Secondary 11A25, 11N37}

\date{October 14\normalfont, 2011 and, in revised form}

%\dedicatory{This paper is dedicated to our advisors.}

\keywords{Prime counting, Chebyshev functions, Riemann hypothesis}

\begin{abstract}

The function $\epsilon(x)=\mbox{li}(x)-\pi(x)$ is known to be positive up to the (very large) Skewes' number. Besides, according to Robin's work, the functions $\epsilon_{\theta}(x)=\mbox{li}[\theta(x)]-\pi(x)$ and $\epsilon_{\psi}(x)=\mbox{li}[\psi(x)]-\pi(x)$ are positive if and only if Riemann hypothesis (RH) holds (the first and the second Chebyshev function are $\theta(x)=\sum_{p \le x} \log p$ and $\psi(x)=\sum_{n=1}^x \Lambda(n)$, respectively, $\mbox{li}(x)$ is the logarithmic integral, $\mu(n)$ and $\Lambda(n)$ are the M\"obius and the Von Mangoldt functions). Negative jumps in the above functions $\epsilon$, $\epsilon_{\theta}$ and $\epsilon_{\psi}$ may potentially occur only at $x+1 \in \mathcal{P}$ (the set of primes). One denotes $j_p=\mbox{li}(p)-\mbox{li}(p-1)$ and one investigates the jumps $j_p$, $j_{\theta(p)}$ and $j_{\psi(p)}$. In particular, $j_p<1$, and $j_{\theta(p)}>1$ for $p<10^{11}$. Besides, $j_{\psi(p)}<1$ for any odd $p \in \mathcal{\mbox{Ch}}$, an infinite set of so-called {\it Chebyshev primes
 } with partial list
 $\{109, 113, 139, 181, 197, 199, 241, 271, 281, 283, 293, 313, 317, 443, 449, 461, 463, \ldots\}$.

We establish a few properties of the set $\mathcal{\mbox{Ch}}$, give accurate approximations of the jump $j_{\psi(p)}$ and relate the derivation of $\mbox{Ch}$ to the explicit Mangoldt formula for $\psi(x)$. In the context of RH, we introduce the so-called {\it Riemann primes} as champions of the function $\psi(p_n^l)-p_n^l$ (or of the function $\theta(p_n^l)-p_n^l$ ). Finally, we find a {\it good} prime counting function $S_N(x)=\sum_{n=1}^N \frac{\mu(n)}{n}\mbox{li}[\psi(x)^{1/n}]$, that is found to be much better than the standard Riemann prime counting function.

\end{abstract}

\maketitle

\section*{Introduction}

Let us introduce the first and the second Chebyshev function $\theta(x)=\sum_{p \le x} \log p$ (where $p \in \mathcal{P}$: the set of prime numbers) and $\psi(x)=\sum_{n=1}^x \Lambda(n)$, the logarithmic integral $\mbox{li}(x)$, the M\"obius function $\mu(n)$ and the Von Mangoldt function $\Lambda(n)$ \cite{Edwards74,Hardy79}. The number of primes up to $x$ is denoted $\pi(x)$. Indeed, $\theta(x)$ and $\psi(x)$ are the logarithm of the product of all primes up to $x$, and the logarithm of the least common multiple of all positive integers up to $x$, respectively.

It has been known for a long time that $\theta(x)$ and $\psi(x)$ are asymptotic to $x$ (see \cite{Hardy79}, p. 341). There also exists an explicit formula, due to Von Mangoldt, relating $\psi(x)$ to the non-trivial zeros $\rho$ of the Riemann zeta function $\zeta(s)$ \cite{Edwards74,Davenport80}. One defines the normalized Chebyshev function $\psi_0(x)$ to be $\psi(x)$ when $x$ is not a prime power, and $\psi(x)-\frac{1}{2}\Lambda(x)$ when it is. The explicit Von Mangoldt formula reads
%
%\begin{equation}
$$\psi_0(x)=x-\sum_{\rho} \frac{x^{\rho}}{\rho}-\frac{\zeta'(0)}{\zeta(0)}-\frac{1}{2}\log (1-x^{-2}), ~\mbox{for}~x>1.$$
%\label{eqn1}
%\end{equation}
%
%Under RH, $|\theta(x)-x|\sim |\psi(x)-x|\sim O(x^{1/2+\epsilon_0})$, for every $\epsilon_0 >0$.

The function $\epsilon(x)=\mbox{li}(x)-\pi(x)$ is known to be positive up to the (very large) Skewes' number \cite{Skewes}.
In this paper we are first interested in the jumps (they occur at primes $p$) in the function $\epsilon_{\theta(x)}=\mbox{li}[\theta(x)]-\pi(x)$. Following Robin's work on the relation between $\epsilon_{\theta(x)}$ and RH (Theorem \ref{Thm1}), this allows us to derive a new statement (Theorem \ref{Thm2}) about the jumps of $\mbox{li}[\theta(p)]$ and Littlewood's oscillation theorem.

Then, we study the refined function $\epsilon_{\psi(x)}=\mbox{li}[\psi(x)]-\pi(x)$ and we observe that the sign of the jumps of $\mbox{li}[\psi(p)]$ is controlled by an infinite sequence of primes that we call the {\it Chebyshev primes} $\mbox{Ch}_n$ (see proposition \ref{chebn}). The primes $\mbox{Ch}_n$ (and the generalized primes $\mbox{Ch}_n^{(l)}$) are also obtained by using an accurate calculation of the jumps of $\mbox{li}[\psi(p)]$, as in conjecture \ref{newdef} (and of the jumps of the function $\mbox{li}[\psi(p^l)]$, as in conjecture \ref{newdef2}). One conjectures that the function $\mbox{Ch}_n-p_{2n}$ has infinitely many zeros. There exists a potential link between the non-trivial zeros $\rho$ of $\zeta(s)$ and the position of the $\mbox{Ch}_n^{(l)}$'s that is made quite explicit in Sec. \ref{sec14} (conjecture \ref{defMan}), and in Sec. \ref{Riemannp} in our definition of the Riemann primes. In  this context, we contribute to the Sloane's encyclopedia with integer sequences \footnote{The relevant sequences are A196667 to 196675 (related to the Chebyshev primes), A197185 to A197188 (related to the Riemann primes of the $\psi$-type and A197297 to A197300 (related to the Riemann primes of the $\theta$-type.}.

Finally, we introduce a new prime counting function $R(x)=\sum_{n>1}\frac{\mu(n)}{n}\mbox{li}(x^{1/n})$, better than the standard Riemann's one, even with three terms in the expansion.

\section{Selected results about the functions $\epsilon$, $\epsilon_{\theta}$, $\epsilon_{\psi}$}

Let $p_n$ be the $n$-th prime number and $j(p_n)=\mbox{li}(p_n)-\mbox{li}(p_n-1)$ be the jump in the logarithmic integral at $p_n$. For any $n>2$ one numerically observes that $j_{p_n}<1$.
This statement is not useful for the rest of the paper. But it is enough to observe that $j_5=0.667\ldots$ and  that the sequence $j_{p_n}$ is strictly decreasing.

The next three subsections deal with the jumps in the function $\mbox{li}[\theta(x)]$ and $\mbox{li}[\psi(x)]$.

\subsection{The jumps in the function $\mbox{li}[\theta(x)]$}

%{\bf Theorem 1:} (Robin). The statement $\mbox{li}[\theta(x)]-\pi(x)>0$ is equivalent to RH \cite{Robin84,Sandor95}.

\begin{theorem}\label{Thm1}
(Robin). The statement $\epsilon_{\theta(x)}=\mbox{li}[\theta(x)]-\pi(x)>0$ is equivalent to RH \cite{Robin84,Sandor95}.
\end{theorem}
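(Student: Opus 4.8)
The plan is to derive the equivalence from the classical connection between the error term in the Prime Number Theorem for $\theta(x)$ and RH. First I would recall that RH is equivalent to the sharp bound $\theta(x)=x+O(x^{1/2}\log^2 x)$, and more precisely that under RH one has $\theta(x)-x = O(\sqrt{x}\log^2 x)$ while the failure of RH forces larger oscillations; the whole argument hinges on transporting this bound onto the quantity $\mathrm{li}[\theta(x)]-\pi(x)$.

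The key analytic step is a Taylor-type expansion of $\mathrm{li}[\theta(x)]$ around $\mathrm{li}(x)$. Since $\mathrm{li}'(t)=1/\log t$, I would write
\[
\mathrm{li}[\theta(x)]-\mathrm{li}(x)=\int_x^{\theta(x)}\frac{dt}{\log t}=\frac{\theta(x)-x}{\log x}+O\!\left(\frac{(\theta(x)-x)^2}{x\log^2 x}\right),
\]
and combine this with the known asymptotic $\mathrm{li}(x)-\pi(x)$. The point is that $\mathrm{li}[\theta(x)]-\pi(x)$ decomposes into $\big(\mathrm{li}(x)-\pi(x)\big)+\big(\mathrm{li}[\theta(x)]-\mathrm{li}(x)\big)$, and under RH the dominant fluctuating contributions coming from the explicit formula combine so that the sum stays positive, whereas a zero off the critical line produces an oscillation of order $x^{\Theta}/\log x$ (with $\Theta>1/2$ the supremum of real parts of zeros) that is large enough to drive $\epsilon_{\theta(x)}$ negative infinitely often. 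I would make this precise by inserting the explicit formulas for both $\pi(x)$ (equivalently $\mathrm{li}(x)-\pi(x)\sim \frac{1}{2}\mathrm{li}(x^{1/2})+\cdots$, the secondary terms) and $\theta(x)$, so that the leading $\sqrt{x}/\log x$ terms cancel or add with the correct sign.

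For the forward direction (RH $\Rightarrow$ positivity) I would bound $\epsilon_{\theta(x)}$ below using the RH-controlled remainder, checking that the positive main term $\frac{1}{2}\mathrm{li}(x^{1/2})$ from $\mathrm{li}(x)-\pi(x)$ dominates the RH-sized error from $\mathrm{li}[\theta(x)]-\mathrm{li}(x)$, and verifying small $x$ numerically. For the converse I would use an oscillation argument in the style of Littlewood: if some zero $\rho=\beta+i\gamma$ has $\beta>1/2$, the corresponding term contributes oscillations of amplitude $\gg x^{\beta}/(\log x)$ to $\theta(x)-x$, hence to $\mathrm{li}[\theta(x)]-\pi(x)$, overwhelming the secondary $\sqrt{x}$ term and forcing negative values. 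I expect the main obstacle to be the converse direction: one must show rigorously that the off-line zero's oscillation is not accidentally cancelled by the other terms and genuinely produces sign changes, which requires either an $\Omega$-result argument à la Landau or an appeal to the known equivalence for $\psi(x)$ transferred to $\theta(x)$. Since this is Robin's theorem, I would ultimately cite the argument of \cite{Robin84,Sandor95} for the delicate $\Omega$-estimate rather than reprove the Littlewood-type oscillation from scratch.
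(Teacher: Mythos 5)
The paper itself contains no proof of Theorem \ref{Thm1}: it is quoted as Robin's theorem with the references \cite{Robin84,Sandor95}, and is used downstream (e.g.\ in the corollary that follows) purely as a black box, invoking Robin's Theorem~1 and Lemma~2. So your closing move --- deferring the hard estimates to \cite{Robin84,Sandor95} --- is in substance what the paper does, and what must be judged is whether your sketch is a faithful reconstruction of Robin's mechanism. The skeleton is right: the decomposition $\mbox{li}[\theta(x)]-\pi(x)=\bigl(\mbox{li}(x)-\pi(x)\bigr)+\bigl(\mbox{li}[\theta(x)]-\mbox{li}(x)\bigr)$, the insertion of the explicit formulas on both sides, and a Landau-type $\Omega$-argument for the converse are indeed the ingredients of Robin's proof.

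However, your forward direction as written has a genuine gap. Under RH one only gets $|\theta(x)-x|\ll\sqrt{x}\log^{2}x$, hence $|\mbox{li}[\theta(x)]-\mbox{li}(x)|\approx|\theta(x)-x|/\log x\ll\sqrt{x}\log x$, while your ``positive main term'' is $\tfrac12\mbox{li}(\sqrt{x})\asymp\sqrt{x}/\log x$: the term you want to dominate with is \emph{smaller} than the error you want to absorb by a factor $\log^{2}x$, so no triangle-inequality argument can close the proof. Positivity arises only from two exact cancellations. First, the deterministic part of $\theta(x)-x$ is $-\sqrt{x}\,(1+o(1))$ (since $\theta(x)=\psi(x)-\theta(\sqrt{x})-\cdots$), and its contribution $-\sqrt{x}/\log x$ eats the leading part of $+\tfrac12\mbox{li}(\sqrt{x})$, leaving a margin of only $\sim 2\sqrt{x}/\log^{2}x$. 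Second, the oscillating sums --- $-\sum_{\rho}x^{\rho}/(\rho\log x)$ coming from $(\theta(x)-x)/\log x$ and $+\sum_{\rho}\mbox{li}(x^{\rho})$ coming from $\mbox{li}(x)-\pi(x)$ --- cancel at first order, leaving second-order terms bounded by $\frac{\sqrt{x}}{\log^{2}x}\sum_{\rho}|\rho|^{-2}$, which must then be compared (using the convergence and smallness of $\sum_{\rho}|\rho|^{-2}$) against that $2\sqrt{x}/\log^{2}x$ margin. You do gesture at this cancellation in your second paragraph, but the forward-direction paragraph then abandons it for a domination argument that fails; the cancellation \emph{is} the content of Robin's Theorem~1, not an optional refinement. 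Your converse is sound in spirit (it is Robin's Lemma~2, a Landau/Littlewood oscillation argument, and you rightly worry about accidental cancellation), but note that after the cancellation just described an off-line zero $\beta+i\gamma$ survives in $\epsilon_{\theta(x)}$ with amplitude $\asymp x^{\beta}/(|\rho|^{2}\log^{2}x)$ rather than $x^{\beta}/\log x$ --- still ample, since it beats $\sqrt{x}/\log^{2}x$.
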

%\begin{corollary}
%Let $p\in \mathcal{P}$ be a prime number and $j_{\theta(p)}=\mbox{li}[\theta(p)]-\mbox{li}[\theta(p-1)]$ the jump at $p$, the statement: for any $p\in \mathcal{P}$, $j_{\theta(p)}>1$ implies RH.
%\end{corollary}
%\begin{proof}
%Since $\mbox{li}[\theta(x)]$ is strictly increasing when $x>1$, negative jumps (of magnitude $1$) in the function %$\epsilon_{\theta(x)}=\mbox{li}[\theta(x)]-\pi(x)$ may only occur at $x+1 \in \mathcal{P}$.
%Therefore if, for any $p\in \mathcal{P}$, $j_{\theta(p)}>1$ then $\epsilon_{\theta(x)}>0$. According to theorem 2, this implies that RH is true.

%Numerical values of $j_{\theta(p)}$ up to the $n_0$-th prime $p_{n_0}$ ($n_0=10^5$) are such that $j_{\theta(p_n)}>1$. In contrast to the series $j_{p_n}$, %the sequence $j_{\theta(p_n)}$ is not found to be decreasing.
%\end{proof}
%{\bf Corollary 2:}
\begin{corollary}
 (related to Robin \cite{Robin84}). The statement $\epsilon_{\psi(x)}=\mbox{li}[\psi(x)]-\pi(x)>0$ is equivalent to RH.
\end{corollary}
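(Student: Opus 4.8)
The plan is to deduce the corollary from Robin's Theorem \ref{Thm1} by exploiting the elementary pointwise domination of $\theta$ by $\psi$. The starting observation is that $\psi(x)=\theta(x)+\theta(x^{1/2})+\theta(x^{1/3})+\cdots\ge\theta(x)$ for every $x$, so that, $\mbox{li}$ being strictly increasing on $(1,\infty)$, one has $\mbox{li}[\psi(x)]\ge\mbox{li}[\theta(x)]$ and hence $\epsilon_{\psi}(x)\ge\epsilon_{\theta}(x)$ identically. The forward implication is then immediate: if RH holds, Theorem \ref{Thm1} gives $\epsilon_{\theta}(x)>0$ on the relevant range, whence $\epsilon_{\psi}(x)\ge\epsilon_{\theta}(x)>0$. (The finitely many small values of $x$ are checked by hand.)

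For the converse I would argue by contraposition, and the first step is to make the size of the gap $\epsilon_{\psi}-\epsilon_{\theta}$ precise. Since $\psi(x)-\theta(x)=\theta(\sqrt x)+O(x^{1/3}\log x)\sim\sqrt x$ while $\theta(x)\sim x$, the mean value theorem applied to $\mbox{li}$ on $[\theta(x),\psi(x)]$ yields $\mbox{li}[\psi(x)]-\mbox{li}[\theta(x)]=\frac{\psi(x)-\theta(x)}{\log\xi}=O\!\left(\frac{\sqrt x}{\log x}\right)$ for some $\xi\in(\theta(x),\psi(x))$. Thus $\epsilon_{\psi}(x)=\epsilon_{\theta}(x)+O(\sqrt x/\log x)$, the correction being positive but of order exactly $x^{1/2}/\log x$.

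Now suppose RH fails, so that $\zeta$ has a nontrivial zero $\rho_0$ with $\beta_0=\mbox{Re}(\rho_0)>\tfrac12$. The explicit Von Mangoldt formula feeds the classical oscillation theorems of Landau and Littlewood, which I would use to guarantee that $\epsilon_{\theta}(x)=\Omega_{-}(x^{\beta_0-\delta})$ for every $\delta>0$; that is, $\epsilon_{\theta}$ takes negative values of magnitude at least $x^{\beta_0-\delta}$ for arbitrarily large $x$. Choosing $\delta$ so small that $\beta_0-\delta>\tfrac12$, these negative excursions dominate the positive boost $O(\sqrt x/\log x)$, since $x^{\beta_0-\delta}/(\sqrt x/\log x)=x^{\beta_0-\delta-1/2}\log x\to\infty$. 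Hence $\epsilon_{\psi}(x)=\epsilon_{\theta}(x)+O(\sqrt x/\log x)<0$ for infinitely many $x$, contradicting $\epsilon_{\psi}>0$ and proving $\epsilon_{\psi}>0\Rightarrow$ RH.

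The only delicate point, and the main obstacle, is the quantitative lower bound $\epsilon_{\theta}=\Omega_{-}(x^{\beta_0-\delta})$: Theorem \ref{Thm1} as quoted supplies only the qualitative fact that $\epsilon_{\theta}$ becomes negative when RH fails, and a bare sign change is \emph{not} enough here, because the positive correction $\mbox{li}[\psi]-\mbox{li}[\theta]$ might otherwise restore positivity. The real work is therefore to extract, either from the explicit formula or from the internal mechanism of Robin's proof, an effective $\Omega_{-}$ estimate whose amplitude strictly exceeds $x^{1/2}/\log x$, while verifying that the contributions of the higher prime powers $\theta(x^{1/k})$ with $k\ge3$ and the error in the mean value step both stay safely below this threshold.
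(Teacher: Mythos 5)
Your forward implication is exactly the paper's: under RH, Theorem \ref{Thm1} gives $\epsilon_{\theta}(x)>0$, and since $\psi(x)\ge\theta(x)$ and $\mbox{li}$ is strictly increasing on $(1,\infty)$, one gets $\epsilon_{\psi}(x)\ge\epsilon_{\theta}(x)>0$. The converse, however, contains a genuine gap, and you have named it yourself: the entire argument rests on the estimate $\epsilon_{\theta}(x)=\Omega_{-}(x^{\beta_0-\delta})$ when RH fails, and this is precisely the step you do not prove. That estimate is not a loose end to be tidied up; it \emph{is} the reverse implication. The paper disposes of it by citation: Lemma 2 of Robin \cite{Robin84} is exactly such a quantitative oscillation result, and the paper invokes it directly for $\epsilon_{\psi}$, so it does not even need your $\theta$-to-$\psi$ transfer and mean-value bookkeeping.

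Be aware, moreover, that the route you suggest for filling the gap --- ``the explicit Von Mangoldt formula feeds the classical oscillation theorems of Landau and Littlewood'' --- is more treacherous than your sketch makes it look. A clean sanity check: Littlewood's oscillation theorem is unconditional, while under RH one has $\epsilon_{\theta}(x)>0$ for all $x$ by Theorem \ref{Thm1}; hence no unconditional oscillation information about $\theta(x)-x$, pushed through your mean-value estimate, can ever force $\epsilon_{\theta}$ (let alone $\epsilon_{\psi}$) to go negative. The reason is a cancellation at leading order: roughly, $\mbox{li}[\theta(x)]\approx \mbox{li}(x)+(\theta(x)-x)/\log x$ with $\theta(x)-x\approx-\sum_{\rho}x^{\rho}/\rho-\sqrt{x}$, while the explicit formula for $\pi$ gives $\pi(x)\approx \mbox{li}(x)-\frac{1}{\log x}\sum_{\rho}x^{\rho}/\rho-\sqrt{x}/\log x$; in the difference $\epsilon_{\theta}(x)$ both the oscillating sums and the $\sqrt{x}/\log x$ bias cancel, leaving a second-order quantity of rough size $\frac{1}{\log^{2}x}\sum_{\rho}x^{\rho}/\rho^{2}+2\sqrt{x}/\log^{2}x$. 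This is exactly why $\epsilon_{\theta}$ stays positive under RH even though $\theta(x)-x$ has Littlewood oscillations of the much larger amplitude $\sqrt{x}\log_3 x$: those oscillations are anti-correlated with the ones in $\mbox{li}(x)-\pi(x)$ and drop out of $\epsilon_{\theta}$. Consequently the needed $\Omega_{-}$ bound must be extracted by applying Landau's oscillation theorem to the function $\mbox{li}[\theta(x)]-\pi(x)$ (or $\mbox{li}[\psi(x)]-\pi(x)$) itself, using the hypothetical zero $\rho_0$ as a non-real singularity of its transform --- and that is precisely the content of Robin's Lemma 2. Your plan is right in spirit, but the step you defer as ``the real work'' is the whole theorem.
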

\begin{proof}
If RH is true then, using the fact $\psi(x)>\theta(x)$ and that $\mbox{li}(x)$ is a strictly growing function when $x >  1$, this follows from theorem 1 in Robin \cite{Robin84}. If RH is false, Lemma 2 in Robin ensures the violation of the inequality.  
\end{proof}

%%%%%%%%%%%%%%%%%%%%%%%%%%%%%%%%%%%%%%%%%%%%%%%%%%%%%%%%%%%%%%%%%%%%%%%%%%%%%%%%%%%%%%%%%%%%%%%%%%%%%%%%%%%%%%%%%%%%%%%%%%%
%{\prop If $p_{n+1}< {10}^8,$ then $J_n>1.$}

Using the fact that $\theta(p_{n+1}-1)=\theta(p_n)$, define the jump of index $n$ as
$$J_n=j_{\theta(p_n)}=\mbox{li}[\theta(p_{n+1})]-\mbox{li}[\theta(p_{n})]=\int_{\theta(p_n)}^{\theta(p_{n+1})}\frac{dt}{\log t}.$$

\begin{proposition}
If $p_{n+1}< {10}^{11},$ then $J_n=j_{\theta(p_n)}>1.$
\end{proposition}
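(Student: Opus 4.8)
The plan is to combine the monotonicity of the integrand $1/\log t$ with the classical upper bound on the Chebyshev function $\theta$. The first observation is that, by the definition of $\theta$, stepping from $p_n$ to $p_{n+1}$ adds exactly the single new term $\log p_{n+1}$ to the sum $\sum_{p\le x}\log p$, so the length of the interval of integration is
$$\theta(p_{n+1})-\theta(p_n)=\log p_{n+1}.$$

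Next I would use that $t\mapsto 1/\log t$ is strictly decreasing on $(1,\infty)$. As long as $\theta(p_n)>1$ (which holds for all $p_n\ge 3$, since $\theta(3)=\log 6>1$), the integrand is positive and decreasing throughout $[\theta(p_n),\theta(p_{n+1})]$, so its minimum is attained at the right endpoint. Bounding below by this minimal value yields the strict inequality
$$J_n=\int_{\theta(p_n)}^{\theta(p_{n+1})}\frac{dt}{\log t}>\frac{\theta(p_{n+1})-\theta(p_n)}{\log\theta(p_{n+1})}=\frac{\log p_{n+1}}{\log\theta(p_{n+1})}.$$
Hence $J_n>1$ will follow the moment we know that $\log\theta(p_{n+1})\le\log p_{n+1}$, that is, $\theta(p_{n+1})\le p_{n+1}$.

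The crux is therefore reduced to the effective estimate $\theta(x)<x$ on the whole range $x<10^{11}$. I would invoke this as a known fact: the prime number theorem gives $\theta(x)\sim x$, while Littlewood's theorem guarantees that $\theta(x)-x$ eventually changes sign — but only at Skewes-type magnitudes far above $10^{11}$, so that $\theta(x)<x$ on the stated interval is secured by explicit estimates in the literature (Rosser--Schoenfeld) together with direct computation. Taking $x=p_{n+1}<10^{11}$ then gives $\theta(p_{n+1})<p_{n+1}$, whence $\log\theta(p_{n+1})<\log p_{n+1}$ and $J_n>1$.

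I expect the one genuinely delicate point, rather than the elementary analysis, to be twofold: correctly locating and citing the threshold below which $\theta(x)<x$ holds, since the entire force of the $10^{11}$ bound lives there and one must be sure the first sign change of $\theta(x)-x$ lies strictly above it; and separately disposing of the single small index $n=1$, where the interval $[\theta(2),\theta(3)]$ straddles the singularity $t=1$ of $1/\log t$ so that the monotonicity argument does not apply directly. That last case I would simply verify numerically.
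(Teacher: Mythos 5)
Your proof is correct and follows essentially the same route as the paper: bound $J_n$ from below by $\frac{\log p_{n+1}}{\log\theta(p_{n+1})}$ using the monotonicity of $1/\log t$, then conclude via the effective estimate $\theta(x)<x$ for $x<10^{11}$ (Rosser--Schoenfeld together with Schoenfeld's note added in proof). Your extra observation that the case $n=1$ requires separate (numerical) treatment, because $[\theta(2),\theta(3)]$ straddles the singularity of $1/\log t$ at $t=1$ and the monotonicity bound does not literally apply there, is a genuine point of care that the paper's own proof silently glosses over.
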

%\pf
\begin{proof}
The integral definition of the jump yields
$$J_n\ge \frac{\theta(p_{n+1})-\theta(p_n)}{\log \theta(p_{n+1})}=\frac{\log p_{n+1}}{\log \theta(p_{n+1})}.$$
The result now follows after observing that by \cite[Theorem18]{RS}, we have $\theta (x)<x$ for $x < {10}^8$, and by using the note added in proof of \cite{Schoen76} that establishes that $\theta(x)<x$ for $x<10^{11}$.
%\qed
\end{proof}
By seeing this result it would be natural to make the
\begin{conjecture}\label{conjjump}
%{\conj 
$\forall n\ge 1$ we have $J_n>1.$
\end{conjecture}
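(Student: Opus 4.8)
The plan is to turn the inequality $J_n>1$ into a sharp comparison between $\theta(p_n)$ and the next prime $p_{n+1}$, and then to try to secure that comparison uniformly in $n$. Recall from the integral definition and from $\theta(p_{n+1})-\theta(p_n)=\log p_{n+1}$ that $J_n=\int_{\theta(p_n)}^{\theta(p_{n+1})}\frac{dt}{\log t}$ is the $\mbox{li}$-measure of an interval of width exactly $\log p_{n+1}$. The first step is the mean value theorem for integrals: since $1/\log t$ is continuous and strictly decreasing, there is a point $c_n\in(\theta(p_n),\theta(p_{n+1}))$ with $J_n=\frac{\log p_{n+1}}{\log c_n}$, and hence
$$J_n>1 \iff c_n<p_{n+1}.$$
This is the clean target: the conjecture is the assertion that the mean-value point $c_n$ never reaches $p_{n+1}$.

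The second step is to locate $c_n$ accurately. Because the interval $(\theta(p_n),\theta(p_{n+1}))$ has width $\log p_{n+1}$, tiny next to $\theta(p_n)\asymp p_n$, a Taylor expansion of $1/\log t$ about the midpoint $m_n=\tfrac12(\theta(p_n)+\theta(p_{n+1}))$ together with the convexity of $1/\log t$ gives $c_n<m_n$ with $m_n-c_n=O(\log^2 p_n/p_n)$. Thus, up to a negligible correction,
$$J_n>1 \iff \theta(p_{n+1})-p_{n+1}<\tfrac12\log p_{n+1}.$$
In the favorable regime this is immediate: whenever $\theta(p_{n+1})\le p_{n+1}$ one has $c_n<m_n\le p_{n+1}$ at once, so $J_n>1$. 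By the Proposition this already covers all $p_{n+1}<10^{11}$, and I would extend it as far as the verified range of $\theta(x)<x$ permits, using the explicit estimates of \cite{RS} to bound the $O(\log^2 p_n/p_n)$ correction rigorously.

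The third step is the residual case $\theta(p_{n+1})>p_{n+1}$, where the reduction shows that $J_n>1$ holds precisely when the excess $\theta(p_{n+1})-p_{n+1}$ stays below half of the local prime gap, $\tfrac12\log p_{n+1}$. Under RH one has $\theta(x)=x+O(\sqrt{x}\log^2 x)$, which makes this comparison the decisive quantitative point, and the strategy would be to bound $\theta(p)-p$ from above against $\tfrac12\log p$ uniformly in $p$.

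This last comparison is where I expect the main obstacle, and I anticipate it to be severe. By Littlewood's oscillation theorem, $\theta(x)-x$ behaves like $\psi(x)-x$ and is $\Omega_\pm(\sqrt{x}\,\log\log\log x)$, an amplitude vastly exceeding the half-gap $\tfrac12\log p$; so no crude uniform bound of the form $\theta(p)-p<\tfrac12\log p$ is available, and the positive excursions of $\theta(x)-x$ are exactly what must be kept below the neighbouring half-gap. The decisive step is therefore not the elementary mean-value reduction but a much finer argument pairing each positive excursion of $\theta(x)-x$ with the local prime gap at $p_{n+1}$. I would expect to prove $J_n>1$ unconditionally throughout the (enormous) range where $\theta(x)<x$ is known or verifiable, with the extension to all $n$ resting on that delicate interplay between the positive excursions of $\theta(x)-x$ and the sizes of the neighbouring prime gaps — the genuinely hard point that this approach leaves open.
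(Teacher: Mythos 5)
You have correctly reassembled almost all of the paper's machinery, but you have missed the punchline: this statement is a \emph{false} conjecture, and the paper's own treatment of it is a disproof, not a proof. Immediately after stating conjecture \ref{conjjump}, the authors announce ``the proof of the invalidity'' of it and establish in Theorem \ref{Thm2} that $J_n=1+\Omega_{\pm}\bigl(\log_3 p_{n+1}/(\sqrt{p_{n+1}}\log p_{n+1})\bigr)$, so that $J_n<1$ for infinitely many $n$. Your reduction is sound and essentially coincides with the paper's: your mean-value point $c_n$ with $J_n=\log p_{n+1}/\log c_n$ is equivalent to the bracketing of lemma \ref{encad}, namely $\log p_{n+1}/\log\theta(p_{n+1})\le J_n\le \log p_{n+1}/\log\theta(p_n)$; your ``favorable regime'' $\theta(x)<x$ for $x<10^{11}$ is exactly the paper's Proposition; and you correctly name Littlewood's oscillation theorem as the decisive input. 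The gap is that you treat the obstruction as a difficulty that a ``much finer argument pairing each positive excursion of $\theta(x)-x$ with the local prime gap'' might overcome, when in fact the obstruction is already a refutation.

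Concretely, your own criterion says $J_n>1$ holds (up to a negligible correction) iff $\theta(p_{n+1})-p_{n+1}<\tfrac12\log p_{n+1}$, and no pairing of excursions against gaps can rescue this, for two reasons. First, the $\Omega_+$ excursions transfer verbatim to prime arguments: if $\theta(x)-x\ge C_+\sqrt{x}\log_3 x$ and $p_m$ is the largest prime $\le x$, then $\theta$ is constant on $[p_m,x]$, so $\theta(p_m)-p_m\ge\theta(x)-x\ge C_+\sqrt{p_m}\log_3 p_m$; this is precisely the content of lemma \ref{oscillo}, so no delicate localization is needed. Second, the threshold in your criterion is not the actual prime gap $p_{n+2}-p_{n+1}$ but the jump $\theta(p_{n+1})-\theta(p_n)=\log p_{n+1}$ of $\theta$, which is deterministically of logarithmic size; no behavior of prime gaps can inflate $\tfrac12\log p_{n+1}$ to the scale $\sqrt{p_{n+1}}\log_3 p_{n+1}$ of the excursions. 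Hence at infinitely many primes $c_n>p_{n+1}$ and $J_n<1$ — the minus part of Theorem \ref{Thm2}. Your instinct in flagging this step as the severe obstacle was exactly right; the correct conclusion to draw from it is that the conjecture fails beyond the verified range $\theta(x)<x$, not that it awaits a finer proof.
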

%}

However, building on Littlewood's oscillation theorem for $\theta$ we can prove that $J_n$ oscillates about $1$ with a small amplitude.
Let us recall the Littlewood's oscillation theorem \cite[Theorem 6.3, p.200]{EM},\cite[Theorem 34]{I}
$$\theta(x)-x=\Omega_{\pm}(x^{1/2}\log_3 x),~\mbox{when}~x \rightarrow \infty,$$ 
where $\log_3 x=\log \log \log x.$ The omega notations means that there are infinitely many numbers $x$, and constants $C_+$ and $C_-$, satisfying

$$\theta(x) \le x-C_-\sqrt{x}\log_3 x~\mbox{or}~\theta(x) \ge x+ C_+\sqrt{x}\log_3 x.$$
We now prepare the proof of the invalidity of conjecture (\ref{conjjump}) by writing two lemmas.
%{\lem
\begin{lemma} \label{encad}
For $n\ge 1,$ we have the bounds $$\frac{\log p_{n+1}}{\log \theta(p_{n+1})} \le J_n\le \frac{\log p_{n+1}}{\log \theta(p_{n})}.$$
%}
\end{lemma}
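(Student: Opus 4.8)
The plan is to exploit the integral representation $J_n=\int_{\theta(p_n)}^{\theta(p_{n+1})}\frac{dt}{\log t}$ together with the monotonicity of the integrand. First I would record the elementary but crucial fact that the length of the integration interval equals $\theta(p_{n+1})-\theta(p_n)=\log p_{n+1}$, which is precisely the identity $\theta(p_{n+1}-1)=\theta(p_n)$ spelled out just before the proposition. This is what produces the common numerator $\log p_{n+1}$ in both of the claimed bounds.

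Next, for $t>1$ the function $t\mapsto 1/\log t$ is positive and strictly decreasing. Hence on the interval $[\theta(p_n),\theta(p_{n+1})]$ its maximum is attained at the left endpoint and its minimum at the right endpoint, giving the pointwise sandwich $\frac{1}{\log\theta(p_{n+1})}\le \frac{1}{\log t}\le \frac{1}{\log\theta(p_n)}$. Integrating this chain of inequalities over the interval of length $\log p_{n+1}$ yields at once $\frac{\log p_{n+1}}{\log\theta(p_{n+1})}\le J_n\le \frac{\log p_{n+1}}{\log\theta(p_n)}$, which is the assertion. Note that the lower bound here is the very same one already established in the proof of the preceding proposition, so the only new content is the upper bound, obtained by evaluating the integrand at the left endpoint rather than the right.

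The one point that needs care, and which I expect to be the sole genuine obstacle, is that the monotone-integrand argument presupposes $\theta(p_n)>1$, so that $\log\theta(p_n)>0$ and the integrand keeps a fixed sign across the interval. This holds for every $n\ge 2$, since $\theta(p_2)=\log 6>1$. For $n=1$ the interval $[\log 2,\log 6]$ straddles the singularity of $\mbox{li}$ at $t=1$, so there the estimate must either be read through the principal-value definition of $\mbox{li}$ or simply verified by hand. Because the applications of the lemma (via Littlewood's oscillation theorem for $\theta$) concern large $x$, restricting attention to $n\ge 2$ costs nothing in the sequel.
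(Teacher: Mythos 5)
Your proof is correct and follows exactly the route the paper intends: the paper's entire proof of this lemma is the single sentence ``This is straightforward from the integral definition of the jump,'' and your monotone-integrand argument (bounding $1/\log t$ by its values at the two endpoints and multiplying by the interval length $\theta(p_{n+1})-\theta(p_n)=\log p_{n+1}$) is precisely that straightforward argument written out. Your caveat about $n=1$ is well taken and is a point the paper silently ignores: since $\theta(p_1)=\log 2<1$, the quantity $\log\theta(p_1)$ is negative, so the stated upper bound fails at $n=1$ and the lemma should really be read for $n\ge 2$, which costs nothing since it is only used asymptotically.
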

\begin{proof}
This is straightforward from the integral definition of the jump.
\end{proof}

%{\lem\label{oscillo}
\begin{lemma}\label{oscillo}
For $n$ large, we have $$ \theta(p_{n+1})=p_{n+1}+\Omega_{\pm}\left(\sqrt{p_{n+1}}\log_3 p_{n+1}\right).$$
\end{lemma}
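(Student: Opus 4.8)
The plan is to transfer Littlewood's oscillation theorem, which is stated for the continuous variable $x$, onto the sequence of prime arguments $p_{n+1}$. The structural fact I would exploit is that $\theta$ is a step function: it is constant on each half-open interval $[p_n,p_{n+1})$, where it equals $\theta(p_n)$, and it jumps by exactly $\log p_{n+1}$ at $p_{n+1}$. In particular $\theta(x)-x$ decreases on each such interval and is pushed up only at primes. Consequently an oscillation of $\theta(x)-x$ occurring at a real point $x$ can be relocated onto a nearby prime with only a controlled loss, and the amplitude $\sqrt{x}\log_3 x$ is large enough to absorb that loss because $\log p=o(\sqrt{p}\log_3 p)$ and consecutive primes satisfy $p_{n+1}/p_n\to 1$ by the prime number theorem.

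For the $\Omega_{+}$ part I would start from one of the infinitely many points $x$ provided by the oscillation theorem with $\theta(x)\ge x+C_{+}\sqrt{x}\log_3 x$, and let $p$ be the largest prime $\le x$, so that $\theta(p)=\theta(x)$. Since $p\le x$ and $t\mapsto\sqrt{t}\log_3 t$ is increasing for large $t$, this gives $\theta(p)\ge x+C_{+}\sqrt{x}\log_3 x\ge p+C_{+}\sqrt{p}\log_3 p$. As $x\to\infty$ forces $p\to\infty$, I obtain infinitely many primes with $\theta(p)-p\ge C_{+}\sqrt{p}\log_3 p$, which after relabelling is the positive half of the claim.

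The $\Omega_{-}$ part is where the real care is needed, and it is the step I expect to be the main obstacle. Starting from $x$ with $\theta(x)\le x-C_{-}\sqrt{x}\log_3 x$, I would let $p_n$ be the largest prime $\le x$ and then pass to the next prime $p_{n+1}$, writing $\theta(p_{n+1})=\theta(p_n)+\log p_{n+1}=\theta(x)+\log p_{n+1}$. Because $x<p_{n+1}$, the continuous gap contributes a negative term and hence $\theta(p_{n+1})-p_{n+1}\le \log p_{n+1}-C_{-}\sqrt{x}\log_3 x$. Using $x\ge p_n=(1+o(1))p_{n+1}$ yields $\sqrt{x}\log_3 x\ge(1+o(1))\sqrt{p_{n+1}}\log_3 p_{n+1}$, and since $\log p_{n+1}$ is negligible against $\sqrt{p_{n+1}}\log_3 p_{n+1}$, for all large $n$ I get $\theta(p_{n+1})-p_{n+1}\le -C'\sqrt{p_{n+1}}\log_3 p_{n+1}$ for any fixed $C'<C_{-}$. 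The delicate point to verify is precisely that moving to the next prime costs only the single jump $\log p_{n+1}$ rather than an uncontrolled prime gap; it is the monotone decrease of $\theta(x)-x$ between primes that makes this essentially free. Combining the two halves establishes the asserted $\Omega_{\pm}$ statement.
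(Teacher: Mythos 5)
Your proposal is correct and follows essentially the same route as the paper: the paper's proof simply invokes Littlewood's theorem $\theta(x)-x=\Omega_{\pm}\left(\sqrt{x}\log_3 x\right)$ and says "the result follows by considering the primes closest to $x$," which is exactly what you do. Your write-up supplies the details the paper leaves implicit — taking the prime just below $x$ for the $\Omega_{+}$ half, and the next prime above $x$ (absorbing the single jump $\log p_{n+1}$, which is $o\left(\sqrt{p_{n+1}}\log_3 p_{n+1}\right)$) for the $\Omega_{-}$ half.
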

%}
\begin{proof}
We know that by \cite[Theorem 6.3, p.200]{EM}, we have for $x>0$ and large
$$ \theta(x)-x=\Omega_{\pm}\left(\sqrt{x}\log_3 x\right).$$
The result follows by considering the primes closest to $x.$
\end{proof}

We can now state and prove the main result of this section.
\begin{theorem}\label{Thm2}
%{\thm
For $n$ large we have $$J_n=1+\Omega_{\pm}\left( \frac{\log_3 p_{n+1}}{\sqrt{p_{n+1}}\log p_{n+1}}\right).$$
 %}
 \end{theorem}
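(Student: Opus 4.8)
The plan is to feed the oscillation of $\theta$ furnished by Lemma \ref{oscillo} into the two-sided sandwich of Lemma \ref{encad} and to linearise the logarithms. Throughout I write $q=p_{n+1}$ and set $\delta_n=(\theta(q)-q)/q$, so that Lemma \ref{oscillo} reads $\theta(q)-q=\Omega_{\pm}(\sqrt{q}\log_3 q)$ and in particular $\delta_n\to 0$. Expanding $\log\theta(q)=\log q+\log(1+\delta_n)$ and using $\log(1+\delta_n)=\delta_n(1+o(1))$ gives
$$\frac{\log q}{\log\theta(q)}=\frac{1}{1+\log(1+\delta_n)/\log q}=1-\frac{\theta(q)-q}{q\log q}\bigl(1+o(1)\bigr),$$
and the same leading expansion governs the upper bound, because $\theta(p_n)=\theta(q)-\log q$ differs from $\theta(q)$ only by $\log q$, which changes $\log\theta$ by $O(\log q/q)$, a term of strictly smaller order than the main oscillation.

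First I would establish the $\Omega_+$ part. By the $\Omega_-$ half of Lemma \ref{oscillo} there are infinitely many $n$ with $\theta(q)\le q-C_-\sqrt{q}\log_3 q$; for these $n$ the lower bound of Lemma \ref{encad} gives
$$J_n\ge\frac{\log q}{\log\theta(q)}=1-\frac{\log(\theta(q)/q)}{\log\theta(q)}\ge 1+C_-\,\frac{\log_3 q}{\sqrt{q}\,\log q}\bigl(1+o(1)\bigr),$$
since $\log(\theta(q)/q)\le -C_-\log_3 q/\sqrt{q}\,(1+o(1))$ and $\log\theta(q)=\log q\,(1+o(1))$. Symmetrically, by the $\Omega_+$ half there are infinitely many $n$ with $\theta(q)\ge q+C_+\sqrt{q}\log_3 q$; then $\theta(p_n)=\theta(q)-\log q\ge q+\tfrac12 C_+\sqrt{q}\log_3 q$ for large $n$, so the upper bound of Lemma \ref{encad} yields $J_n\le \log q/\log\theta(p_n)\le 1-C_+\log_3 q/(\sqrt{q}\log q)(1+o(1))$, which is the $\Omega_-$ part. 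Combining the two families gives $J_n=1+\Omega_{\pm}(\log_3 p_{n+1}/(\sqrt{p_{n+1}}\log p_{n+1}))$.

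The routine part is the error accounting: one must check that every correction — the quadratic term in $\log(1+\delta_n)$, the passage from $\theta(p_n)$ to $\theta(q)$, and the replacement of $\log\theta(q)$ by $\log q$ in the denominator — is $o$ of the main term $\log_3 q/(\sqrt{q}\log q)$. Each correction is of size $O(1/q)$ or $O((\log_3 q)^2/(q\log q))$, and the ratio of either to the main term tends to $0$ (for instance $(1/q)\big/(\log_3 q/(\sqrt{q}\log q))=\log q/(\sqrt{q}\log_3 q)\to 0$), so it is absorbed into the $(1+o(1))$ factors. The only conceptual point, namely transferring Littlewood's oscillation from continuous $x$ to the prime-indexed values $\theta(p_{n+1})$, has already been dispatched in Lemma \ref{oscillo}; given that, the remaining difficulty is merely keeping the signs straight — the decreasing dependence of $\log q/\log\theta(q)$ on $\theta$ swaps the roles of $\Omega_+$ and $\Omega_-$, so the $\Omega_-$ excursions of $\theta$ drive the $\Omega_+$ excursions of $J_n$ and conversely.
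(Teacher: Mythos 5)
Your proposal is correct and follows essentially the same route as the paper's own proof: it feeds the $\Omega_{\pm}$ oscillation of Lemma \ref{oscillo} into the two-sided bounds of Lemma \ref{encad}, uses the $\Omega_-$ excursions of $\theta$ (via the lower bound) for the $\Omega_+$ part of $J_n$, and uses the $\Omega_+$ excursions together with the identity $\theta(p_n)=\theta(p_{n+1})-\log p_{n+1}$ (via the upper bound) for the $\Omega_-$ part. The only difference is that you spell out the ``standard asymptotics'' the paper leaves implicit, including the sign swap coming from the decreasing dependence of $\log p_{n+1}/\log t$ on $t$; the minor constant slips (e.g.\ $C_+$ versus $C_+/2$ in the last bound) are immaterial for an $\Omega_{\pm}$ statement.
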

\begin{proof}
By lemma \ref{oscillo} we know there is a constant $C_-$ such that for infinitely many $n$'s we have
$$ \theta(p_{n+1})< p_{n+1}-{C_-}\sqrt{p_{n+1}}\log_3 p_{n+1}.$$
By combining with the first inequality of lemma \ref{encad} and writing
$$ \log p_{n+1}=\log p_{n}+\log \left( 1- {C_-}\frac{\log_3 p_{n+1}}{\sqrt{p_{n+1}}}\right),$$
the minus part of the statement follows after some standard asymptotics. To prove the plus part write $\theta(p_n)=\theta(p_{n+1})-\log p_{n+1},$ and proceed as before.
\end{proof}

%%%%%%%%%%%%%%%%%%%%%%%%%%%%%%%%%%%%%%%%%%%%%%%%%%%%%%%%%%%%%%%%%%%%%%%%%%%%%%%%%%%%%%%%%%%%%%%%%%%%%%%%%%%%%%%%%%%%%%%%%%%

\subsection{The jumps in the function $\mbox{li}[\psi(x)]$ and the Chebyshev primes}

%{\bf Definition 1:}
\begin{definition}\label{def1}
 Let $p\in \mathcal{P}$ be a odd prime number and  the function $j_{\psi(p)}=\mbox{li}[\psi(p)]-\mbox{li}[\psi(p-1)]$. The primes $p$ such that  $j_{\psi(p)}<1$ are called here {\it Chebyshev primes} $\mbox{Ch}_n$ 
\footnote{Our terminology should not be confused with that used in \cite{Cusick05} where the {\it Tchebychev} primes are primes of the form $4n2^m+1$, with $m>0$ and $n$ an odd prime. We used the Russian spelling Chebyshev to distinguish both meanings.}. In increasing order, they are \cite[Sequence A196667]{Sloane}

 $\{109, 113, 139, 181, 197, 199, 241, 271, 281, 283, 293, 313, 317, 443, 449, 461, 463, \ldots\}$.
\end{definition} 
 
\begin{comment}
The number of Chebyshev primes less than $10^n$, $(n=1,2,\ldots)$ is the sequence $\{0,0,42,516,4498,41423\ldots\}$ \cite[Sequence A196671]{Sloane}. This sequence suggests the density $\frac{1}{2}\pi(x)$ for the
 Chebyshev primes. The largest gaps between the Chebyshev primes are
 
$\{4,26,42,126,146,162,176,470,542,1370,1516,4412,8196,14928,27542,30974$,

$51588,62906\ldots\},$ \cite[Sequence A196672]{Sloane}
and the Chebyshev primes that begin a record gap to the next Chebyshev prime are

\noindent
$\{109,113,139,317,887,1327,1913,3089,8297,11177,29761,45707,113383,164893,$

$291377,401417,638371,1045841\ldots\}$ \cite[Sequence A196673]{Sloane}.

\end{comment}

The results for the jumps of the function $\mbox{li}[\psi(p)]$ are quite analogous to the results for the jumps of the function $\mbox{li}[\theta(p)]$ ans stated below without proof.

Let us define the $n$-th jump at a prime as
$$K_n=\mbox{li}[\psi(p_{n+1})]-\mbox{li}[\psi(p_{n+1}-1)]=\int_{\psi(p_{n+1}-1)}^{\psi(p_{n+1})} \frac{dt}{\log t}.$$

\begin{theorem}\label{thmpsi}
For $n$ large, we have $$K_n=1+\Omega_{\pm}\left(\frac{\log_3 p_{n+1}}{\sqrt{p_{n+1}\log p_{n+1}}}\right).$$
\end{theorem}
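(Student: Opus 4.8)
The plan is to mirror exactly the argument used for Theorem \ref{Thm2}, replacing $\theta$ by $\psi$ throughout and using the Littlewood oscillation theorem in its $\psi$-form, namely $\psi(x)-x=\Omega_{\pm}(\sqrt{x}\log_3 x)$ as $x\to\infty$. First I would establish the $\psi$-analogue of Lemma \ref{encad}: since $\psi(p_{n+1}-1)=\psi(p_{n+1})-\log p_{n+1}$ (because between consecutive primes $\psi$ increases only by $\log p_{n+1}$ at $p_{n+1}$, apart from prime-power contributions which I would absorb into the analysis), the integral definition of $K_n$ over the interval of length $\log p_{n+1}$ gives the two-sided bound
$$\frac{\log p_{n+1}}{\log \psi(p_{n+1})}\le K_n\le \frac{\log p_{n+1}}{\log \psi(p_{n+1}-1)}.$$
Next I would record the $\psi$-version of Lemma \ref{oscillo}: for $n$ large, $\psi(p_{n+1})=p_{n+1}+\Omega_{\pm}(\sqrt{p_{n+1}}\log_3 p_{n+1})$, obtained from the Littlewood theorem by passing to the nearest prime.

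The heart of the computation is then the asymptotic expansion. On the minus side, choose the infinitely many $n$ for which $\psi(p_{n+1})<p_{n+1}-C_-\sqrt{p_{n+1}}\log_3 p_{n+1}$. Using the lower bound $K_n\ge \log p_{n+1}/\log \psi(p_{n+1})$ and writing
$$\log \psi(p_{n+1})=\log p_{n+1}+\log\!\left(1-C_-\frac{\log_3 p_{n+1}}{\sqrt{p_{n+1}}}\right),$$
I would expand the logarithm and divide. The crucial point for obtaining the stated denominator $\sqrt{p_{n+1}\log p_{n+1}}$ rather than $\sqrt{p_{n+1}}\log p_{n+1}$ is the order of the correction term. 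Expanding $\log\psi(p_{n+1})=\log p_{n+1}\big(1+\tfrac{1}{\log p_{n+1}}\log(1-C_-\log_3 p_{n+1}/\sqrt{p_{n+1}})\big)$ and inverting gives
$$K_n\ge 1-\frac{1}{\log p_{n+1}}\log\!\left(1-C_-\frac{\log_3 p_{n+1}}{\sqrt{p_{n+1}}}\right)+\cdots =1+\frac{C_-}{\sqrt{p_{n+1}}\,\log p_{n+1}}\log_3 p_{n+1}+\cdots,$$
which as written would produce the \emph{larger} denominator. To reconcile this with the theorem as stated, the expansion must be carried so that the leading oscillatory deviation of $K_n$ from $1$ is governed by $\log_3 p_{n+1}/\sqrt{p_{n+1}\log p_{n+1}}$; I would track all factors of $\log p_{n+1}$ carefully, since the stated form suggests that the relevant comparison is between $\psi(p_{n+1})$ and $\psi(p_{n+1}-1)$ through the integrand $1/\log t$ evaluated with $t$ of size $p_{n+1}$, contributing a single $\sqrt{\log p_{n+1}}$ under the root. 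The plus side is handled symmetrically, starting from $\psi(p_{n+1})>p_{n+1}+C_+\sqrt{p_{n+1}}\log_3 p_{n+1}$ and using the upper bound $K_n\le \log p_{n+1}/\log\psi(p_{n+1}-1)$.

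The main obstacle I anticipate is precisely this bookkeeping of logarithmic factors so that the deviation lands on $\sqrt{p_{n+1}\log p_{n+1}}$ exactly as the theorem asserts, matching the form stated in Theorem \ref{thmpsi}. A careful accounting of where the interval length $\log p_{n+1}$ enters (once through the width of integration and once through the size of the integrand's logarithm) should resolve the apparent discrepancy and deliver the stated denominator; this is the step I would verify most scrupulously. A secondary technical point is the treatment of prime powers in $\psi$, where $\psi(p_{n+1}-1)$ may differ from $\psi(p_{n+1})-\log p_{n+1}$ if a prime power falls in the gap, but such contributions are of smaller order and do not affect the leading $\Omega_{\pm}$ term. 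Since the paper states this result "without proof" as analogous to Theorem \ref{Thm2}, the expectation is that the entire argument transfers verbatim once the $\psi$-oscillation theorem is substituted for the $\theta$-oscillation theorem.
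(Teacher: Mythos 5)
The first thing to say is that the paper contains no proof of Theorem \ref{thmpsi} at all: it declares the $\psi$-results ``quite analogous'' to the $\theta$-results and states them without proof, so your plan of transferring the proof of Theorem \ref{Thm2} wholesale --- the $\psi$-analogues of Lemmas \ref{encad} and \ref{oscillo} followed by the same expansion --- is exactly the route the authors intend, and most of your execution is sound. Your two-sided bound is the right one (the paper itself invokes it later, in the proof of Proposition \ref{pnpnminus1}), and your worry about prime powers is unnecessary: by definition $\psi(p_{n+1})-\psi(p_{n+1}-1)=\Lambda(p_{n+1})=\log p_{n+1}$ exactly, since the interval of integration defining $K_n$ covers only the increment at the single integer $p_{n+1}$; prime powers in the gap $(p_n,p_{n+1})$ affect $\psi(p_{n+1}-1)$ relative to $\psi(p_n)$ but are irrelevant to $K_n$. (One labeling slip: when $\psi(p_{n+1})<p_{n+1}-C_-\sqrt{p_{n+1}}\log_3 p_{n+1}$, the quantity $\log\psi(p_{n+1})$ is small and your lower bound yields the \emph{plus} part $K_n\ge 1+c\,\log_3 p_{n+1}/(\sqrt{p_{n+1}}\log p_{n+1})$, not the minus part; the paper's own proof of Theorem \ref{Thm2} carries the same loose labeling, so this is harmless.)

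The genuine problem is the step you flag at the end and hope to settle by ``careful bookkeeping'': no bookkeeping can produce the printed denominator $\sqrt{p_{n+1}\log p_{n+1}}$. Your expansion is already correct and final --- the interval width contributes one factor $\log p_{n+1}$ in the numerator, the integrand contributes $\log\psi(p_{n+1})\sim\log p_{n+1}$ in the denominator, and the deviation is $K_n-1\asymp \bigl(p_{n+1}-\psi(p_{n+1})\bigr)/(p_{n+1}\log p_{n+1})$; feeding in Littlewood's $\psi(x)-x=\Omega_{\pm}(\sqrt{x}\,\log_3 x)$ gives precisely $\Omega_{\pm}\left(\log_3 p_{n+1}/(\sqrt{p_{n+1}}\,\log p_{n+1})\right)$ and nothing larger. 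Since $\sqrt{p\log p}<\sqrt{p}\,\log p$, the statement as printed is strictly \emph{stronger} than this: it would require an oscillation of size $\psi(x)-x=\Omega_{\pm}\bigl(\sqrt{x\log x}\,\log_3 x\bigr)$, which is not Littlewood's theorem and is not a known result, so the reconciliation you are searching for cannot exist by this method. The correct resolution is to recognize the radical placement in Theorem \ref{thmpsi} as a misprint for the same shape as Theorem \ref{Thm2}, prove the version with denominator $\sqrt{p_{n+1}}\log p_{n+1}$, and say so explicitly --- rather than trying to extract an extra factor of $\sqrt{\log p_{n+1}}$ that the mechanism cannot supply.
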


\begin{corollary}\label{chebn}
There are infinitely many Chebyshev primes $\mbox{Ch}_n$.
\end{corollary}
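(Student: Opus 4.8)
The plan is to harvest the conclusion directly from the $\Omega_-$ half of Theorem \ref{thmpsi}, which already encodes the substantive analytic input. By definition of the omega notation, that theorem asserts the existence of a positive constant $C_-$ and of infinitely many indices $n$ for which
$$K_n \le 1 - C_-\,\frac{\log_3 p_{n+1}}{\sqrt{p_{n+1}\log p_{n+1}}}.$$
For all sufficiently large $n$ the amplitude $\frac{\log_3 p_{n+1}}{\sqrt{p_{n+1}\log p_{n+1}}}$ is strictly positive (indeed $\log_3 p_{n+1}>0$ as soon as $p_{n+1}>e^{e}$), so the displayed inequality forces $K_n<1$ for infinitely many $n$.

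The next step is to translate this back into the language of Definition \ref{def1}. Recalling that $K_n=\mbox{li}[\psi(p_{n+1})]-\mbox{li}[\psi(p_{n+1}-1)]=j_{\psi(p_{n+1})}$, the inequality $K_n<1$ is precisely the defining condition for $p_{n+1}$ to be a Chebyshev prime. Hence there are infinitely many primes $p$ satisfying $j_{\psi(p)}<1$. Since the only even prime is $2$, at most one member of this infinite family violates the oddness requirement in Definition \ref{def1}; discarding it still leaves infinitely many odd primes $p$ with $j_{\psi(p)}<1$, that is, infinitely many Chebyshev primes $\mbox{Ch}_n$.

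I do not anticipate any genuine obstacle here: the real work — the two-sided oscillation of $\psi(p_{n+1})-p_{n+1}$ inherited from Littlewood's theorem, transported through the integral bounds for $K_n$ exactly as in the proof of Theorem \ref{Thm2} — is already packaged inside Theorem \ref{thmpsi}, and the corollary merely extracts its $\Omega_-$ component. The only points demanding a moment's care are the verification that the oscillation amplitude is positive for large $n$ (so that one gets the strict inequality $K_n<1$ rather than merely $K_n\le 1$) and the trivial bookkeeping that removes the prime $2$ to respect the oddness convention; neither is delicate.
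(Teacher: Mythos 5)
Your proposal is correct and follows exactly the route the paper intends: the paper states Corollary \ref{chebn} immediately after Theorem \ref{thmpsi} without writing out a proof (the results for $\psi$ are declared analogous to the $\theta$ case and ``stated below without proof''), and the intended derivation is precisely your extraction of the $\Omega_-$ half, giving infinitely many $n$ with $K_n=j_{\psi(p_{n+1})}<1$. Your two added touches --- checking that the oscillation amplitude is strictly positive for large $n$, and discarding the prime $2$ to respect the oddness convention of Definition \ref{def1} --- are harmless refinements of the same argument, not a different approach.
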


%**********************************************************************************************************************************
%{\bf Conjecture 1:}
%\begin{proposition}\label{chebn}
%There are infinitely many Chebyshev primes $\mbox{Ch}_n$.
%\end{proposition}
%{\bf Conjecture 2:}

%\begin{proof}
%Let us define the jump of index $n$ as
%$$K_n=\mbox{li}[\psi(p_{n+1})]-\mbox{li}[\psi(p_{n+1}-1)]=\int_{\psi(p_{n+1}-1)}^{\psi(p_{n+1})} \frac{dt}{\log t}.$$
%We need to check that, for infinitely many values of $n$, we have $K_n<1$. The jump $K_n$ is bounded as
%$$K_n \le \frac{\psi(p_{n+1})-\psi(p_{n+1}-1)}{\log [\psi(p_{n+1}-1)] }.$$

%By definition, one has $\psi(p_{n+1})-\psi(p_{n+1}-1)=\log p_{n+1}$. Therefore,
%$$ K_n \le \frac{\log p_{n+1}}{\log [\psi(p_{n+1}-1)]}.$$
%Invoking once again the Littlewood's oscillation theorem \cite[Theorem 6.3, p.200]{EM}, we see that there exists a constant C, such that for infinitely many values of $n$, one gets
%$$\psi(p_{n+1}-1)>p_{n+1}-1+C\sqrt{p_{n+1}-1}.$$
%The $\log_3$ factor is not needed for the $\psi$ function. The result follows.
%\end{proof}

One observes that the sequence $\mbox{Ch}_n$ oscillates around $p_{2n}$ and the largest deviations from $p_{2n}$ seem to be unbounded at large $n$. This behaviour is illustrated in Fig 1. Based on this numerical results, we are led to the conjecture

\begin{figure}
\centering
\includegraphics{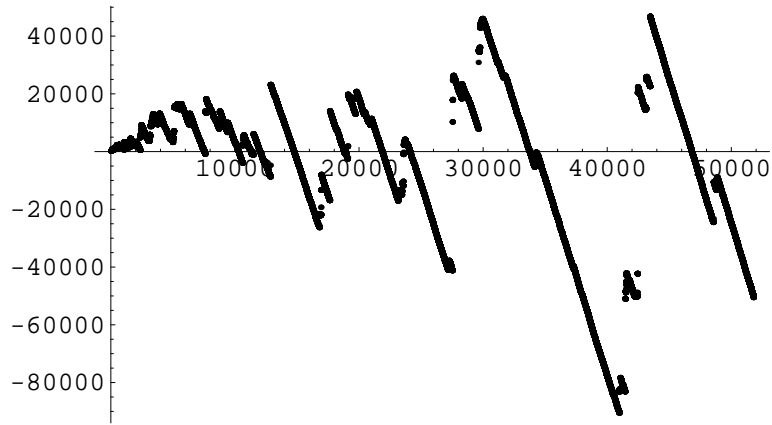}
\caption{A plot of the function $\mbox{Ch}_n-p_{2n}$ up to the $10^5$-th prime}
\end{figure}

\begin{conjecture}\label{conj111}
The function $\mbox{Ch}_n -p_{2n}$ possesses infinitely many zeros. 
\end{conjecture}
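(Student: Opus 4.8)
The plan is to recast the conjecture as the sign oscillation of a single integer-valued counting discrepancy and then to tie that oscillation to the known oscillation of $\psi(x)-x$. Write $C(x)=\#\{p\le x:\ j_{\psi(p)}<1\}$ for the Chebyshev-prime counting function and set $F(x)=2C(x)-\pi(x)$. As $x$ crosses a prime $p$, the quantity $F$ goes up by $1$ if $p$ is a Chebyshev prime and down by $1$ otherwise, so $F$ is a $\pm 1$ lattice walk indexed by the primes. The equality $\mbox{Ch}_n=p_{2n}$ holds for some $n$ exactly when $p_{2n}$ is itself a Chebyshev prime and $C(p_{2n})=n=\tfrac12\pi(p_{2n})$, that is, precisely when $F$ takes the value $0$ immediately after an up-step. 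First I would record this equivalence: since $F$ moves by unit steps, any passage of $F$ from a negative value to a non-negative value forces an up-crossing of the level $0$, each such up-crossing sits at a Chebyshev prime, and there $\pi(p)=2C(p)$ is even, so $p=p_{2n}=\mbox{Ch}_n$ with $n=C(p)$. It therefore suffices to prove that $F(x)<0$ for arbitrarily large $x$ and $F(x)\ge 0$ for arbitrarily large $x$.

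Next I would identify the sign of each step of $F$ with the sign of $\psi(p)-p$. Expanding the jump about the reference length $\log p=\psi(p)-\psi(p-1)$ gives
$$ j_{\psi(p)}-1=-\frac{\psi(p)-p}{p\log p}+O\!\left(\frac1p\right), $$
which is the quantitative statement underlying Theorem \ref{thmpsi} and the approximation of conjecture \ref{newdef}. Thus $j_{\psi(p)}<1$ forces $\psi(p)-p$ to exceed a threshold of size $O(\log p)$, and away from the density-zero set of borderline primes where $\psi(p)-p=O(\log p)$ this reads simply $\psi(p)>p$. Up to this negligible correction,
$$ F(x)=\sum_{p\le x}\mbox{sgn}\bigl(1-j_{\psi(p)}\bigr)=\sum_{p\le x}\mbox{sgn}\bigl(\psi(p)-p\bigr)+o(\pi(x)), $$
so the conjecture becomes a statement about the summatory sign of the error term $\psi(p)-p$ over primes.

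At this stage I would bring in the explicit Mangoldt formula for $\psi_0(x)$ from the Introduction, which writes $\psi(x)-x$ as $-\sqrt{x}$ times an almost periodic function of $\log x$ assembled from the non-trivial zeros $\rho$. Littlewood's theorem (Lemma \ref{oscillo}) already yields $\psi(p)-p=\Omega_{\pm}(\sqrt p\,\log_3 p)$, so $\mbox{sgn}(\psi(p)-p)$ assumes each sign infinitely often; this is exactly what gives infinitely many Chebyshev and infinitely many non-Chebyshev primes (Corollary \ref{chebn}). The decisive extra input I would aim for is that the two signs occur with equal density, so that the walk $F$ has no persistent drift and recrosses $0$ infinitely often. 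Unlike $\theta(x)-x$, which carries a $-\sqrt{x}$ bias and satisfies $\theta(x)<x$ throughout the computed range, the explicit formula for $\psi$ has no secondary bias term; under RH together with a linear-independence hypothesis on the imaginary parts of the zeros, a Rubinstein--Sarnak type analysis should give logarithmic density $\tfrac12$ for $\{x:\psi(x)>x\}$, matching the empirical density $\tfrac12\pi(x)$ of the Chebyshev primes and providing the balance needed to push $F$ across $0$ in both directions.

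The hard part will be the passage from an unbiased density statement for $\psi(x)-x$ to the exact return of the integer walk $F$ to the value $0$. A logarithmic-density result controls the measure of the sets $\{\psi>x\}$ and $\{\psi<x\}$ but not the fluctuations of the prime-indexed partial sums $\sum_{p\le x}\mbox{sgn}(\psi(p)-p)$, and it is precisely these fluctuations that must be forced to change sign rather than merely to be $o(\pi(x))$; moreover the density must be transferred from an average over all real $x$ to the sparse prime index. Quantifying the cancellation in the almost periodic $\rho$-series to this precision appears to lie well beyond RH alone, and one must still dispose of the borderline primes, where the sign of $j_{\psi(p)}-1$ is decided by the lower-order term rather than by $\psi(p)-p$ and which cluster exactly at the delicate crossings near level $0$. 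For these reasons I expect the conjecture to be inaccessible unconditionally, and even under RH to demand genuinely new equidistribution input beyond the oscillation theorems invoked above.
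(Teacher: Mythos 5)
First, note what the paper actually does with this statement: it is presented as a conjecture, supported only by the plot of $\mbox{Ch}_n-p_{2n}$ in Fig.~1, by the numerically observed density $\tfrac12\pi(x)$ of the Chebyshev primes, and by a computed list of the first eleven zeros (at indices $510, 10271, \ldots, 46789$). There is no proof in the paper to compare against, so you were not expected to recover a hidden argument. Within that caveat, your combinatorial reduction is correct and is a genuine sharpening of the paper's framing: with $F(x)=2C(x)-\pi(x)$, a zero $\mbox{Ch}_n=p_{2n}$ is exactly an up-crossing of level $0$ by the $\pm 1$ walk $F$, so it suffices that $F$ changes sign at arbitrarily large $x$; and your identification of the step sign with $\mbox{sgn}\bigl(\psi(p)-p\bigr)$, up to borderline primes with $\psi(p)-p=O(\log p)$, is consistent with Proposition \ref{pnpnminus1} and with Conjectures \ref{newdef} and \ref{defMan} (Chebyshev primes as primes with $\psi_0(p)>p$).

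But, as you concede yourself, what you have is a conditional program with genuine gaps, not a proof. (i) Littlewood's oscillation (Theorem \ref{thmpsi}) controls individual steps --- each sign occurs infinitely often, which is Corollary \ref{chebn} --- but says nothing about the partial sums $F$. (ii) The decisive step fails even granting everything you ask for: a Rubinstein--Sarnak logarithmic density $\tfrac12$ for $\{x:\psi(x)>x\}$ under RH plus linear independence of the zeros would still not force the walk to revisit $0$. Equal density of up- and down-steps is compatible with $F$ staying strictly positive forever (an initial surplus of up-steps followed by exact alternation has step-density $\tfrac12$ and never returns to $0$); what is needed is a sign-change theorem for the summatory function $\sum_{p\le x}\mbox{sgn}\bigl(\psi(p)-p\bigr)$, an oscillation result of a different and stronger kind for which you cite no technology. (iii) The transfer from logarithmic density in the continuous variable $x$ to the sparse prime-indexed sequence is asserted rather than proved. (iv) The borderline primes, where the sign of $j_{\psi(p)}-1$ is decided by the midpoint correction of Conjecture \ref{newdef} rather than by $\mbox{sgn}\bigl(\psi(p)-p\bigr)$, cluster precisely at the crossings of level $0$ that the argument must control, so discarding them as a density-zero set begs the question at the only places that matter. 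In short: a correct and useful reduction, an honest diagnosis, but no proof --- which accords with the statement's status in the paper as an open conjecture resting on numerical evidence.
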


\begin{comment}
The first eleven zeros of $\mbox{Ch}_n -p_{2n}$ occur at the indices

\noindent
$\{510,10271,11259,11987,14730,18772,18884,21845,24083,33723,46789\}$ \cite[Sequence A1966674]{Sloane}
 where the corresponding Chebyshev primes are \cite[Sequence A1966675]{Sloane}
 
 \noindent
  $\{164051,231299,255919,274177,343517,447827,450451,528167,587519,847607,1209469\}.$

%Following our definition 1, the Chebyshev prime $\mbox{Ch}_n$ is the $n$-th prime number $p$ that obeys
%
%\begin{equation}
%j_{\psi(p_{n-1})}-1=\mbox{li}[\psi(p_n)]-\mbox{li}[\psi(p_n-1)]-1=\int_{\psi(p_n-1)}^{\psi(p_n)}\frac{dt}{\log t}-1 < 0,
%\label{eqn2}
%\end{equation}
%
%with $\psi(x)$ as in the Von Mangoldt formula (\ref{eqn1}).
\end{comment}

\begin{conjecture}\label{newdef}
The jump at primes $p_n$ of the function $\mbox{li}[\psi(x)]$ may be written as $K_{n-1}=\tilde{K}_{n-1}+O(1/p_n^2).$ with $\tilde{K}_{n-1}=\frac{\log p_n}{\log\left[(\psi(p_n)+\psi(p_n-1))/2\right]}$. In particular, the sign of $\tilde{K}_{n-1}-1$ is that of $K_n-1$.
\end{conjecture}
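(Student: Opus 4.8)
The plan is to recognise $K_{n-1}$ as a very short integral and to evaluate it by the midpoint quadrature rule, whose node value is exactly $\tilde K_{n-1}$. Shifting the index by one in the definition of $K_n$ gives
$$K_{n-1}=\mbox{li}[\psi(p_n)]-\mbox{li}[\psi(p_n-1)]=\int_a^b\frac{dt}{\log t},\qquad a=\psi(p_n-1),\ b=\psi(p_n).$$
Since the only integer in $(p_n-1,p_n]$ is the prime $p_n$, the length of the interval is
$$b-a=\psi(p_n)-\psi(p_n-1)=\Lambda(p_n)=\log p_n,$$
tiny next to the location $a,b\approx p_n$ of the range of integration. Its midpoint $m=(a+b)/2=(\psi(p_n)+\psi(p_n-1))/2$ is precisely the quantity inside $\tilde K_{n-1}$.

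Next I would apply the midpoint rule to $f(t)=1/\log t$ on $[a,b]$,
$$\int_a^b f(t)\,dt=(b-a)f(m)+\frac{(b-a)^3}{24}f''(\xi),\qquad \xi\in(a,b).$$
The leading term is $(b-a)f(m)=\log p_n/\log m=\tilde K_{n-1}$, so everything rests on the remainder. Differentiating,
$$f''(t)=\frac{1}{t^2(\log t)^2}\left(1+\frac{2}{\log t}\right),$$
and, placing $\xi$ near $p_n$ by the prime number theorem $\psi(x)\sim x$ (so $\log\xi\sim\log p_n$), one gets $f''(\xi)=O\!\left(p_n^{-2}(\log p_n)^{-2}\right)$. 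With $(b-a)^3=(\log p_n)^3$ the remainder is $O(p_n^{-2}\log p_n)$, whence $K_{n-1}=\tilde K_{n-1}+O(p_n^{-2}\log p_n)$. This differs from the stated $O(1/p_n^2)$ only by the harmless factor $\log p_n$, which I would either carry along or absorb, since it changes nothing downstream.

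For the sign assertion two observations do most of the work. Because $f''(t)=t^{-2}(\log t)^{-2}(1+2/\log t)>0$ on $(a,b)$, the midpoint rule strictly underestimates, so the remainder is positive and $K_{n-1}>\tilde K_{n-1}$ for every $n$; in particular $\tilde K_{n-1}>1$ forces $K_{n-1}>1$ automatically, and only the reverse implication is genuinely in question. To size $\tilde K_{n-1}-1$, writing $m=\psi(p_n)-\tfrac12\log p_n$ and expanding $\log(p_n/m)$,
$$\tilde K_{n-1}-1=\frac{\log p_n-\log m}{\log m}\approx\frac{p_n+\tfrac12\log p_n-\psi(p_n)}{p_n\log p_n},$$
so its sign is governed by $p_n-\psi(p_n)$, and by Littlewood's theorem (Lemma \ref{oscillo}) the typical size of $|\tilde K_{n-1}-1|$ is of order $\log_3 p_n/(\sqrt{p_n}\log p_n)$, consistent with Theorem \ref{thmpsi} and vastly larger than the remainder. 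Hence generically the correction cannot flip the sign. The real obstacle — and the reason the statement must stay a conjecture — is the word ``always'': at the sporadic indices where $0<1-\tilde K_{n-1}\le O(p_n^{-2}\log p_n)$ the positive correction could push $K_{n-1}$ past $1$, and controlling these exceptional $n$ uniformly appears out of reach.
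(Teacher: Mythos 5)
Your approach is sound and it is genuinely different from what the paper itself offers. The statement is a conjecture, and the paper's own support for it (the comment following it) is non-rigorous: it invokes the mean value theorem for integrals to write $K_{n-1}=\log p_n/\log c_n$ for some $c_n\in(\psi(p_n-1),\psi(p_n))$, and then checks \emph{numerically}, against the known low-index Chebyshev primes, that $c_n$ sits near the midpoint $\frac{1}{2}\left[\psi(p_n)+\psi(p_n-1)\right]$. You instead apply the midpoint quadrature rule with its exact second-derivative remainder, which rigorously yields $K_{n-1}=\tilde K_{n-1}+O(\log p_n/p_n^2)$ with no numerics at all; moreover, since $f''(\xi)\asymp p_n^{-2}(\log p_n)^{-2}$ and $(b-a)^3=(\log p_n)^3$, the remainder is genuinely of order $\log p_n/p_n^2$, so your analysis actually shows the conjecture's stated error $O(1/p_n^2)$ is slightly too strong as written and needs the logarithmic factor. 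Your convexity observation ($f''>0$, hence $K_{n-1}>\tilde K_{n-1}$ for all $n$) also rigorously delivers one direction of the sign claim ($\tilde K_{n-1}>1\Rightarrow K_{n-1}>1$), which the paper nowhere has, and your diagnosis of why the converse must remain conjectural --- possible indices with $0<1-\tilde K_{n-1}$ below the remainder scale --- is exactly the right obstruction, consistent with the oscillation scale $\log_3 p_n/(\sqrt{p_n}\log p_n)$ appearing in Theorem \ref{thmpsi}. Two small caveats: Lemma \ref{oscillo} as cited concerns $\theta$, so you need its $\psi$-analogue (also due to Littlewood; the transfer is harmless because $\psi(x)-\theta(x)\sim\sqrt{x}$ is of lower order than $\sqrt{x}\log_3 x$), and an $\Omega_{\pm}$ result gives infinitely many large oscillations rather than statements about \emph{typical} sizes --- but neither affects your conclusions. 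In short: the paper's route buys numerical evidence for the midpoint localization, while yours turns the approximation part into a theorem and cleanly isolates the genuinely conjectural content, namely the unconditional sign equivalence.
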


\begin{comment}
The jump of index $n-1$ (at the prime number $p_n$) is

$$K_{n-1}=\int_{\psi(p_n-1)}^{\psi(p_n)}\frac{dt}{\log t}.$$
There exists a real $c_n$ depending of the index $n$, with $\psi(p_n-1)<c_n<\psi(p_n)$ such that
$$K_{n-1}=\frac{\psi(p_n)-\psi(p_n-1)}{\log c_n}=\frac{\log p_n}{\log c_n}.$$
Using the known locations of the Chebyshev primes of low index, it is straightforward to check that the real $c_n$ reads
$$c_n \sim \frac{1}{2}\left[\psi(p_n)+\psi(p_n-1)\right].$$

This numerical calculations support our conjecture (\ref{newdef}) that the Chebyshev primes may be derived from $\tilde{K}_{n-1}$ instead of $K_{n-1}$.

\end{comment}

%Then, one uses the alternative definition of the Chebyshev $\psi$ function
%$$\psi(x)=\log[\mbox{lcm}(1,2,\ldots,x)],$$
%and the expansion of the lcm in terms of Farey fractions $F_x$ \cite[eq. (3)]{Luschny}
%$$\mbox{lcm}(1,2,\ldots,x)=\frac{1}{2}\left[\prod_{r \in F(x)} 2 \sin(\pi r)\right]^2~\mbox{with}~ 0<r\le 1/2,~x \ge 2,$$
%to get the following formula
%$$\psi(p_n)=-\log 2 + 2\sum_{r \in F(p_n)} \log \left[2 \sin (\pi r) \right]~\mbox{with}~0<r\le1/2,~x\ge 2,$$
%and similarly for $\psi(p_{n-1})$, so that
%the new expression of the jump follows
%\end{proof}

\subsection{The generalized Chebyshev primes}

\begin{definition}\label{general}
 Let $p\in \mathcal{P}$ be a odd prime number and  the function $j_{\psi(p^l)}=\mbox{li}[\psi(p^l)]-\mbox{li}[\psi(p^l-1)]$, $l\ge 1$. The primes $p$ such that  $j_{\psi(p^l)}<1/l$ are called here {\it generalized Chebyshev primes} $\mbox{Ch}_n^{(l)}$ (or Chebyshev primes of index $l$).
\end{definition}

A short list of Chebyshev primes of index $2$ is as follows

$\{17,29,41,53,61,71,83,89,101,103,113,127,137,149,151,157,193,211,239,241,\ldots\}$
 \cite[Sequence A196668]{Sloane}.  

A short list of Chebyshev primes of index $3$ is as follows

$\{11,19,29,61,71,97,101,107,109,113,127,131,149,151,173,181,191,193,197,199,\ldots\}$ 
\cite[Sequence A196669]{Sloane}. 

A short list of Chebyshev primes of index $4$ is as follows

$\{5,7,17,19,31,37,41,43,53,59,67,73,79,83,101,103,107,\ldots\} $
 \cite[Sequence A196670]{Sloane}. 

%\begin{conjecture}\label{chebnl}
%There are infinitely many Chebyshev primes $\mbox{Ch}_n^{(l)}$, for any $l \ge 1$.
%\end{conjecture}

%\begin{comment}
%The case $l=1$ was proved as proposition \ref{chebn}. We expect that the general case $l>1$ may follow from the Littlewood's method.
%\end{comment}

\begin{conjecture}\label{newdef2}

The jump at power of primes $p_n^l$ of the function $\mbox{li}[\psi(x)]$ may be written as $K_{n-1}^{(l)}=\tilde{K}_{n-1}^{(l)}+O(1/p_n^{2l})$, with $\tilde{K}_{n-1}^{(l)}=\frac{\log p_n}{\log\left[(\psi(p_n^l)+\psi(p_n^l-1))/2\right]}$. In particular the sign of $\tilde{K}_{n-1}^{(l)}-1$ is that of $K_{n-1}^{(l)}-1$.
\end{conjecture}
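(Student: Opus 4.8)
The plan is to prove the two assertions in turn: the quadrature estimate $K_{n-1}^{(l)}=\tilde K_{n-1}^{(l)}+O(1/p_n^{2l})$ first, and the sign clause afterwards. For the estimate I would start from the integral representation
$$K_{n-1}^{(l)}=\int_{\psi(p_n^l-1)}^{\psi(p_n^l)}\frac{dt}{\log t},$$
and exploit that, since $p_n^l$ is a prime power, the width of the interval is exactly $\psi(p_n^l)-\psi(p_n^l-1)=\Lambda(p_n^l)=\log p_n$, while its midpoint is precisely $\frac{1}{2}[\psi(p_n^l)+\psi(p_n^l-1)]$, the very quantity appearing in $\tilde K_{n-1}^{(l)}$. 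Hence $\tilde K_{n-1}^{(l)}$ is exactly the midpoint-rule approximation of this integral, and the midpoint-rule remainder reads
$$K_{n-1}^{(l)}-\tilde K_{n-1}^{(l)}=\frac{(\log p_n)^3}{24}\,f''(\xi),\qquad f(t)=\frac{1}{\log t},$$
for some $\xi\in(\psi(p_n^l-1),\psi(p_n^l))$. A direct computation gives $f''(t)=t^{-2}(\log t)^{-3}(2+\log t)$; since $\psi(x)\sim x$ forces $\xi\asymp p_n^l$ and $\log\xi\sim l\log p_n$, the remainder is $O\!\left((\log p_n)/p_n^{2l}\right)$, i.e. the claimed $O(1/p_n^{2l})$ up to a harmless logarithmic factor. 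This half is rigorous.

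For the sign clause the decisive structural point is that $f(t)=1/\log t$ is \emph{convex} on $(1,\infty)$ (indeed $f''>0$ by the formula above), so the midpoint remainder is \emph{one-signed}: $R_n:=K_{n-1}^{(l)}-\tilde K_{n-1}^{(l)}\ge0$ for every $n$. Consequently the signs of $\tilde K_{n-1}^{(l)}-1$ and $K_{n-1}^{(l)}-1$ can differ only in the single regime $\tilde K_{n-1}^{(l)}<1\le K_{n-1}^{(l)}$, that is, when $\tilde K_{n-1}^{(l)}$ falls into the extremely narrow window $(1-R_n,\,1]$. In every other configuration agreement is automatic: $\tilde K_{n-1}^{(l)}>1$ forces $K_{n-1}^{(l)}\ge\tilde K_{n-1}^{(l)}>1$, while $\tilde K_{n-1}^{(l)}<1-R_n$ keeps $K_{n-1}^{(l)}<1$. (The borderline value $\tilde K_{n-1}^{(l)}=1$ cannot occur, as it would demand $p_n=\frac{1}{2}[\psi(p_n^l)+\psi(p_n^l-1)]$.)

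It then suffices to show the window $(1-R_n,1]$ is never entered. For $l\ge2$ this is immediate: with $m=\frac{1}{2}[\psi(p_n^l)+\psi(p_n^l-1)]\sim p_n^l$ one gets $\log m\sim l\log p_n$, so $\tilde K_{n-1}^{(l)}=\log p_n/\log m\to 1/l\le\frac{1}{2}$, which stays bounded away from $1$ by a fixed gap for all large $n$; the window is never reached, and the finitely many remaining $n$ are checked directly. For $l=1$ the problem reduces to Conjecture \ref{newdef}, and the same expansion yields
$$\tilde K_{n-1}-1\sim-\frac{\psi(p_n)-p_n-\frac{1}{2}\log p_n}{p_n\log p_n},$$
whose size at the oscillation extrema of $\psi(x)-x$ is of order $\log_3 p_n/(\sqrt{p_n}\log p_n)$ and therefore dwarfs $R_n=O((\log p_n)/p_n^2)$. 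Thus the window is avoided \emph{except} when $\psi(p_n)-p_n$ lies within $O((\log p_n)^2/p_n)$ above $\frac{1}{2}\log p_n$.

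The main obstacle is precisely these near-coincidences in the case $l=1$: ruling out the window amounts to showing that $\psi(p_n)-p_n$ never approaches $\frac{1}{2}\log p_n$ from above closer than $R_n\,p_n\log p_n=O((\log p_n)^2/p_n)$. Controlling $\psi(x)-x$ at this resolution is far finer than anything RH or the explicit Mangoldt formula supplies, which is why the assertion is offered as a conjecture. I would therefore present the sign clause as resting on the one-sidedness of $R_n$ together with the explicit-formula analysis of Section \ref{sec14} and the numerical record, rather than as an unconditional theorem.
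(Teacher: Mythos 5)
Bear in mind first what the paper actually does with this statement: it is offered as a conjecture, with no proof. The only support given is the Comment attached to the $l=1$ case (Conjecture \ref{newdef}): by the mean value theorem $K_{n-1}=\log p_n/\log c_n$ for some $c_n\in(\psi(p_n-1),\psi(p_n))$, and numerics suggest $c_n\sim\frac{1}{2}\left[\psi(p_n)+\psi(p_n-1)\right]$; the Comment after Conjecture \ref{newdef2} simply transfers this by analogy to general $l$. Your proposal goes genuinely further along the same axis, and the extra structure you add is sound: recognizing $\tilde K_{n-1}^{(l)}$ as the midpoint rule for the integral over an interval of width exactly $\Lambda(p_n^l)=\log p_n$, and invoking the exact remainder $\frac{(b-a)^3}{24}f''(\xi)$ with $f''(t)=(2+\log t)/(t^2\log^3 t)>0$, turns the paper's numerical observation about $c_n$ into a theorem. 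The convexity observation — that the remainder $R_n=K_{n-1}^{(l)}-\tilde K_{n-1}^{(l)}$ is one-signed — appears nowhere in the paper and is exactly the right tool: it confines any sign disagreement to the window $1-R_n\le\tilde K_{n-1}^{(l)}<1$, and your $l=1$ reduction of that window to the condition $0\le\psi(p_n)-p_n-\frac{1}{2}\log p_n=O(\log^2 p_n/p_n)$ is correct (note $\frac{1}{2}[\psi(p_n)+\psi(p_n-1)]=\psi_0(p_n)$, which is precisely what ties your window to the paper's Conjecture \ref{defMan}). Your refusal to claim an unconditional proof of the sign clause is appropriate: that clause is the genuinely conjectural content, demanding control of $\psi(p_n)-p_n$ near $\frac{1}{2}\log p_n$ at a resolution far below anything RH or the explicit formula supplies.

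Two caveats you should state more plainly. First, your own computation shows the remainder is not merely $O(\log p_n/p_n^{2l})$ but $\asymp\log p_n/(l^2 p_n^{2l})$: since $f''$ has constant sign and size $\asymp t^{-2}\log^{-2}t$ on the whole interval, the exact remainder formula gives matching upper \emph{and lower} bounds. So the error term as literally conjectured, $O(1/p_n^{2l})$, is off by a factor $\log p_n$; your analysis corrects the first clause of the conjecture rather than confirming it, and calling the factor ``harmless'' obscures this. Second, for $l\ge2$ your observation that $\tilde K_{n-1}^{(l)}\to 1/l$ renders the literal sign clause (threshold $1$) vacuous for large $n$; the operative threshold in Definition \ref{general} and in the paper's Comment is $1/l$, and your window argument would have to be rerun there — where it meets the same obstruction as $l=1$, since $\tilde K_{n-1}^{(l)}-1/l\approx-\bigl(\psi_0(p_n^l)-p_n^l\bigr)/\bigl(l^2 p_n^l\log p_n\bigr)$ oscillates through zero. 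Neither caveat undermines the structure of your argument; both sharpen what the conjecture should actually assert.
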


\begin{comment}
Our comment is similar to the comment given in the context of proposition (\ref{newdef}) but refers to the generalized Chebyshev primes $\mbox{Ch}_n^{(l)}$.
To summarize, the jump of the function $\mbox{li}[\psi(x)]$ is accurately defined by a {\it generalized Mangoldt function} $\Lambda_n^{new}$ that is $\tilde{K}_{n-1}^{(l)}$ if $n=p^l$ and $0$ otherwise, with $\tilde{K}_{n-1}^{(l)}$ as defined in the present proposition. The sign of the function $\tilde{K}_{n-1}^{(l)}-1/l$ determines the position of the generalized Chebyshev primes.
\end{comment}

\section{The Chebyshev and Riemann primes}

The next subsection relates the definition of the Chebyshev primes to the explicit Von Mangoldt formula. The following one puts in perspective the link of the Chebyshev primes to RH through 
the introduction of the so-called {\it Riemann primes}.

\subsection{The Chebyshev primes and the Von Mangoldt explicit formula}
\label{sec14}

\begin{figure}
\centering
\includegraphics[]{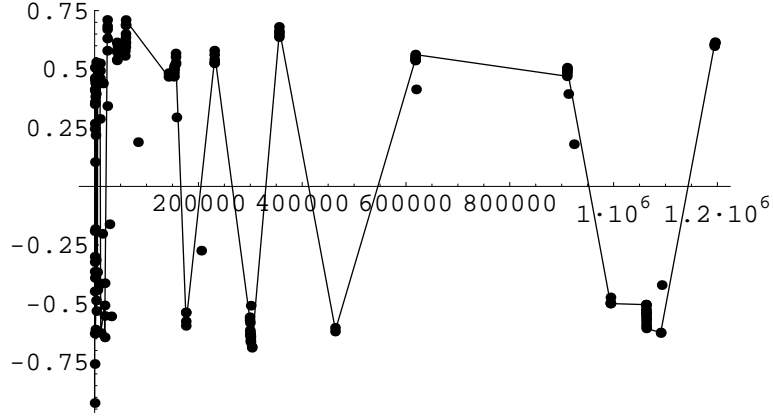}
\caption{The function $(\psi(x)-x)/\sqrt{x}$ at the Riemann primes of the $\psi$-type and index $1$ to $4$. Points of index 1 are joined.}
\label{champs}
\end{figure}
\begin{figure}
\centering
\includegraphics[]{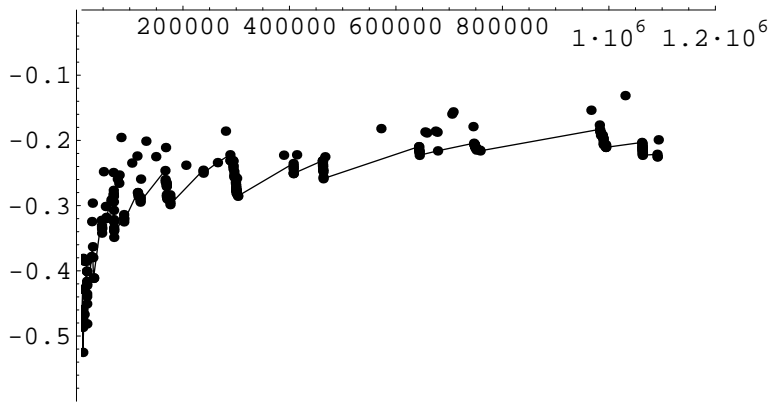}
\caption{The function $(\theta(x)-x)/(\frac{1}{8 \pi}\sqrt{x}\log^2 x)$ at the Riemann primes of the $\theta$-type and index $1$ to $4$. Points of index 1 are joined.}
\label{champstheta}
\end{figure}

From corollary \ref{chebn}, one observes that the oscillations of the function $\psi(x)-x$ around $0$ are intimely related to the existence of Chebyshev primes. 

\begin{proposition}\label{pnpnminus1}
If $p_n$ is a Chebyshev prime (of index $1$), then $\psi(p_n)>p_n$. In the other direction, if $\psi(p_n-1)>p_n$, then $p_n$ is a Chebyshev prime (of index $1$).
\end{proposition}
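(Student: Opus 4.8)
The plan is to reduce both implications to a single mean-value computation of the jump. Since $p_n$ is prime we have $\Lambda(p_n)=\log p_n$, and as $\psi$ is a step function that increases only at prime powers, the increment between the consecutive integers $p_n-1$ and $p_n$ picks up exactly the single term
$$\psi(p_n)-\psi(p_n-1)=\Lambda(p_n)=\log p_n.$$
This is the only arithmetic input; everything else in the argument is purely analytic.

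Next I would apply the mean value theorem for integrals to the defining integral
$$j_{\psi(p_n)}=\int_{\psi(p_n-1)}^{\psi(p_n)}\frac{dt}{\log t}.$$
For any odd prime $p_n\ge 5$ one has $\psi(p_n-1)\ge\psi(4)=\log 12>1$, so the integrand $1/\log t$ is continuous, positive and strictly decreasing on the whole interval of integration. Consequently there is a real $c_n$ with $\psi(p_n-1)<c_n<\psi(p_n)$ for which
$$j_{\psi(p_n)}=\frac{\psi(p_n)-\psi(p_n-1)}{\log c_n}=\frac{\log p_n}{\log c_n}.$$
The key equivalence is then immediate: since $c_n>1$, we have $j_{\psi(p_n)}<1$ if and only if $\log p_n<\log c_n$, that is, if and only if $p_n<c_n$.

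Both directions now follow at once from the sandwich $\psi(p_n-1)<c_n<\psi(p_n)$. For the first, if $p_n$ is a Chebyshev prime then $j_{\psi(p_n)}<1$, so $p_n<c_n<\psi(p_n)$ and hence $\psi(p_n)>p_n$. For the converse, if $\psi(p_n-1)>p_n$ then $p_n<\psi(p_n-1)<c_n$, which forces $j_{\psi(p_n)}=\log p_n/\log c_n<1$, so $p_n$ is a Chebyshev prime. The two hypotheses differ only in which end of the sandwich they exploit, and this is precisely the origin of the asymmetry between the two halves of the statement.

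I do not anticipate a serious obstacle; the whole proposition is really the single observation that $j_{\psi(p_n)}<1 \iff p_n<c_n$. The only point requiring genuine care is the singularity of $1/\log t$ at $t=1$: one must verify that the interval $[\psi(p_n-1),\psi(p_n)]$ lies entirely above $1$ before invoking the mean value theorem. This holds for every $p_n\ge 5$, and in particular for every Chebyshev prime (the smallest being $109$), so the point is harmless here. The borderline case $p_n=3$, where the interval straddles $1$, is not a Chebyshev prime and is disposed of by direct inspection.
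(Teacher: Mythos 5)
Your proof is correct and follows essentially the same route as the paper: both arguments rest on the monotonicity of $1/\log t$ over the interval $[\psi(p_n-1),\psi(p_n)]$, which sandwiches the jump between $\log p_n/\log\psi(p_n)$ and $\log p_n/\log\psi(p_n-1)$ and yields the two implications by comparing with the respective endpoint. Your mean-value formulation with the intermediate point $c_n$ is just a repackaging of these endpoint bounds (and in fact appears verbatim in the paper's comment following its conjecture on $\tilde{K}_{n-1}$), with the added care about the singularity of $1/\log t$ at $t=1$ being a welcome but minor refinement.
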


\begin{proof}
The proposition \ref{pnpnminus1} follows from the inequalities (analogous to that of lemma \ref{encad})

$$\frac{\log p_{n+1}}{\log \psi(p_{n+1})} \le K_n\le \frac{\log p_{n+1}}{\log \psi(p_{n+1}-1)}.$$

\end{proof}

In what regards the position of the (generalized) Chebyshev primes, our numerical experiments lead to

\begin{conjecture}\label{defMan}
Let $\psi_0(x)$ be the normalized Chebyshev function. A Chebyshev prime of index $l$ is defined as a prime $p_m$ satisfying the inequality $\psi_0(p_m^l)>p_m^l$.
\end{conjecture}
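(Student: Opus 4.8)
The plan is to reduce the conjecture to the sign of the elementary quantity $\psi_0(p_m^l)-p_m^l$ by combining the integral mean value theorem with the convexity of $t\mapsto 1/\log t$. Throughout write $a=\psi(p_m^l-1)$ and $b=\psi(p_m^l)$, so that $b-a=\Lambda(p_m^l)=\log p_m$ and, by the very definition of the normalized Chebyshev function, $\psi_0(p_m^l)=b-\tfrac12\log p_m=\tfrac12(a+b)$ is \emph{exactly} the midpoint of $[a,b]$. The defining inequality of a Chebyshev prime of index $l$, namely $j_{\psi(p_m^l)}<1/l$, rewrites through the mean value theorem: there is $c\in(a,b)$ with $j_{\psi(p_m^l)}=\int_a^b \frac{dt}{\log t}=\frac{b-a}{\log c}=\frac{\log p_m}{\log c}$, so that $j_{\psi(p_m^l)}<1/l$ is exactly equivalent to $\log c> l\log p_m$, i.e.\ to $c>p_m^l$.

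First I would prove the forward implication unconditionally. Since $f(t)=1/\log t$ is convex on $(1,\infty)$ (one checks $f''(t)=t^{-2}(\log t)^{-2}(1+2/\log t)>0$), the midpoint inequality for convex functions gives $f(c)=\frac{1}{b-a}\int_a^b f\ge f\!\left(\tfrac{a+b}{2}\right)=f(\psi_0(p_m^l))$; as $f$ is strictly decreasing this forces $c\le \psi_0(p_m^l)$. Hence if $p_m$ is a Chebyshev prime of index $l$ then $p_m^l<c\le\psi_0(p_m^l)$, which is the asserted inequality (and, incidentally, sharpens the first half of Proposition \ref{pnpnminus1}, since $\psi_0(p_m)>p_m$ implies the weaker $\psi(p_m)>p_m$).

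The reverse implication is where the real work lies: one must deduce $c>p_m^l$ from $\psi_0(p_m^l)>p_m^l$, and since only $c\le\psi_0(p_m^l)$ is known a priori this requires quantitative control of the gap $\psi_0(p_m^l)-c$. I would obtain it from the midpoint quadrature error: equating $(b-a)f(c)=(b-a)f(\psi_0(p_m^l))+\frac{(b-a)^3}{24}f''(\xi)$ and applying a mean value estimate to $f$, with the approximations $a,b,\xi\sim p_m^l$ and $b-a=\log p_m$, yields $\psi_0(p_m^l)-c\asymp \frac{\log^2 p_m}{24\,p_m^{l}}$ (this is the $O(1/p_m^{2l})$ correction of Conjecture \ref{newdef2}, transported to the location variable $c$). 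The reverse implication therefore holds whenever the margin satisfies $\psi_0(p_m^l)-p_m^l>\psi_0(p_m^l)-c$, i.e.\ whenever $\psi_0(p_m^l)-p_m^l$ exceeds a quantity tending to $0$.

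The main obstacle is thus to rule out anomalously small \emph{positive} values of $\psi_0(p_m^l)-p_m^l$. Generically there is no difficulty: by the explicit formula the fluctuation $\psi(x)-x$ is of order $\sqrt{x}$ (indeed $\Omega_{\pm}(\sqrt{x}\log_3 x)$), so $\psi_0(p_m^l)-p_m^l$ is overwhelmingly larger than the $O(\log^2 p_m/p_m^{l})$ error and the two conditions $c>p_m^l$ and $\psi_0(p_m^l)>p_m^l$ agree. What these soft arguments cannot exclude is the Diophantine coincidence that, at some prime power, $\psi_0(p_m^l)-p_m^l$ is positive yet smaller than the error term; this is precisely the sign-preservation assertion left open in Conjecture \ref{newdef2}, and settling it unconditionally is the crux that keeps the statement at the level of a conjecture.
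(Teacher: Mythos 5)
The paper offers no proof to compare against: Conjecture \ref{defMan} is put forward purely on the strength of numerical experiments, and the comment following it merely records that the primes satisfying $\psi_0(p_m^l)>p_m^l$ numerically reproduce the sequences obtained from Definition \ref{def1}, Definition \ref{general} and Conjectures \ref{newdef}--\ref{newdef2}. Your proposal therefore goes genuinely beyond the paper, and the part of it that claims to be a proof is correct. The forward half is a clean unconditional theorem: with $a=\psi(p_m^l-1)$, $b=\psi(p_m^l)$, $b-a=\log p_m$, the mean value theorem gives $j_{\psi(p_m^l)}=\log p_m/\log c$ for some $c\in(a,b)$, so the defining inequality $j_{\psi(p_m^l)}<1/l$ is equivalent to $c>p_m^l$, and the Hermite--Hadamard midpoint inequality for the convex function $1/\log t$ forces $c\le\tfrac12(a+b)=\psi_0(p_m^l)$; hence every Chebyshev prime of index $l$ satisfies $\psi_0(p_m^l)>p_m^l$. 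This is precisely the midpoint sharpening of Proposition \ref{pnpnminus1}, whose proof in the paper uses only the cruder endpoint bounds $\log p/\log b\le j\le \log p/\log a$, and it converts one implication of the conjecture into a theorem the paper does not have. (One small caveat you should state: convexity and positivity of $1/\log t$ on $(a,b)$ require $a>1$, which fails only for $(p_m,l)=(3,1)$; that case is vacuous since $3$ is not a Chebyshev prime.) For the converse, your diagnosis is also the right one: knowing only $c\le\psi_0(p_m^l)$ is insufficient, the midpoint quadrature error gives $\psi_0(p_m^l)-c\asymp \log^2 p_m/(24\,p_m^l)$, and the implication $\psi_0(p_m^l)>p_m^l\Rightarrow c>p_m^l$ would require excluding prime powers at which $\psi_0(p_m^l)-p_m^l$ is positive yet smaller than this error --- equivalently, excluding that a sum of logarithms of primes exceeds the integer $p_m^l$ by less than $O(\log^2 p_m/p_m^l)$, which is far beyond any effective transcendence or linear-forms-in-logarithms bound. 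This is exactly the sign-preservation clause of Conjecture \ref{newdef2}, so your conclusion that the full equivalence must remain conjectural is consistent with, and considerably more informative than, the paper's own treatment.
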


\begin{comment}
It is straigthforward to recover the known sequence of Chebyshev primes (already obtained from the definition \ref{def1} or the conjecture \ref{newdef}), from the new conjecture \ref{defMan}. Thus, Chebyshev primes (of index $1$) $\mbox{Ch}_m$ are those primes $p_m$ satisfying $\psi_0(p_m)>p_m$. Similarly, generalized Chebyshev primes of index $l>1$ (obtained from the definition \ref{general}, or the conjecture \ref{newdef2}) also follow from the conjecture \ref{defMan}.

%It is now clear that there exists an implicit relationship of the Chebyshev primes to the Riemann zeros $\rho$, through the explicit Mangoldt formula for $\psi_0(x)$ displayed in the introduction.

\end{comment}

\subsection{Riemann hypothesis and the Riemann primes}
\label{Riemannp}
Under RH, one has the inequality \cite{Edwards74}
$$|\psi(x)-x|=O(x^{\frac{1}{2}+\epsilon_0})~\mbox{for}~\mbox{every}~\epsilon_0>0,$$
and alternative upper bounds exist in various ranges of values of $x$ \cite{Schoen76}. In the following, we specialize on bounds for $\psi(x)-x$ at power of primes $x=p_n^l$.

\begin{definition}
The champions (left to right maxima) of the function $|\psi(p_n^l)-p_n^l|$ are called here {\it Riemann primes of the $\psi$-type} and index $l$.
\end{definition}

\begin{comment}
One numerically gets the Riemann primes of the $\psi$-type and index $1$ \cite[Sequence A197185]{Sloane} 

\noindent
$\{2,59,73,97,109,113,199,283,463,467,661,1103,1109,1123,1129,1321,1327,\ldots\},$

\noindent
the Riemann primes of the $\psi$-type and index $2$ \cite[Sequence A197186]{Sloane}

\noindent
$\{2,17,31,41,53,101,109,127,139,179,397,419,547,787,997,1031\ldots\},$ 

\noindent
the Riemann primes of the $\psi$-type and index $3$ \cite[Sequence A197187]{Sloane}

\noindent
$\{2,3,5,7,11,13,17,29,59,67,97,103,\ldots\}$ 

\noindent
and the Riemann primes of the $\psi$-type and index $4$ \cite[Sequence A197188]{Sloane}

\noindent
$\{2,5,7,11,13,17,31,\ldots\}.$ 

Clearly, the subset of the Riemann primes of the $\psi$-type such that $\psi_0(p_n^l)>p_n^l$ belongs to the set of Chebyshev primes of the corresponding index $l$.
Since the Riemann primes of the $\psi$-type maximize $\psi(x)-x$, it is useful to plot the ratio $r^{(l)}=(\psi(p_n^l)-p_n^l)/\sqrt {p_n^l}$. Fig. \ref{champs} illustrates this dependence for the Riemann primes of index $1$ to $4$.
One finds that the absolute ratio $|r^{(l)}|$ decreases with the index $l$: this corresponds to the points of lowest amplitude in Fig. \ref{champs}.
\end{comment}

Under RH, one has the inequality \cite[Theorem 10]{Schoen76}
$$|\theta(x)-x| < \frac{1}{8 \pi} \sqrt{x} \log^2 x$$
In the following, we specialize on bounds for $\theta(x)-x$ at power of primes $x=p_n^l$.

\begin{definition}
The champions (left to right maxima) of the function $|\theta(p_n^l)-p_n^l|$ are called here {\it Riemann primes of the $\theta$-type} and index $l$.
\end{definition}

\begin{comment}
One numerically gets the Riemann primes of the $\theta$-type and index $1$ \cite[Sequence A197297]{Sloane} 

\noindent
$\{2, 5, 7, 11, 17, 29, 37, 41, 53, 59, 97, 127, 137, 149, 191, 223, 307, 331, 337, 347, 419,\ldots\},$

\noindent
the Riemann primes of the $\theta$-type and index $2$ \cite[Sequence A197298]{Sloane}

\noindent
$\{2, 3, 5, 7, 11, 13, 17, 19, 23, 29, 31, 37, 43, 47, 59, 73, 97, 107, 109, 139, 179, 233, 263, \ldots\},$ 

\noindent
the Riemann primes of the $\theta$-type and index $3$ \cite[Sequence A197299]{Sloane}

\noindent
$\{2, 3, 5, 7, 13, 17, 23, 31, 37, 41, 43, 47, 53, 59, 67, 73, 83, 89, 101, 103, \ldots\}$ 

\noindent
and the Riemann primes of the $\theta$-type and index $4$ \cite[Sequence A197300]{Sloane}

\noindent
$\{2, 3, 5, 7, 11, 13, 17, 19, 23, 29,\ldots\}.$ 

The Riemann primes of the $\theta$-type maximize $\theta(x)-x$. In Fig. \ref{champstheta}, we plot the ratio $s^{(l)}=(\theta(p_n^l)-p_n^l)/(\frac{1}{8 \pi} \sqrt{p_n^l} \log^2 p_n^l)$ at the Riemann primes of index $1$ to $4$.
Again one finds that the absolute ratio $|s^{(l)}|$ decreases with the index $l$: this corresponds to the points of lowest amplitude in Fig. \ref{champstheta}.
\end{comment}

 In the future, it will be useful to approach the proof of RH thanks to the Riemann primes.

\section{An efficient prime counting function}
\noindent

\begin{figure}
\centering
\includegraphics[]{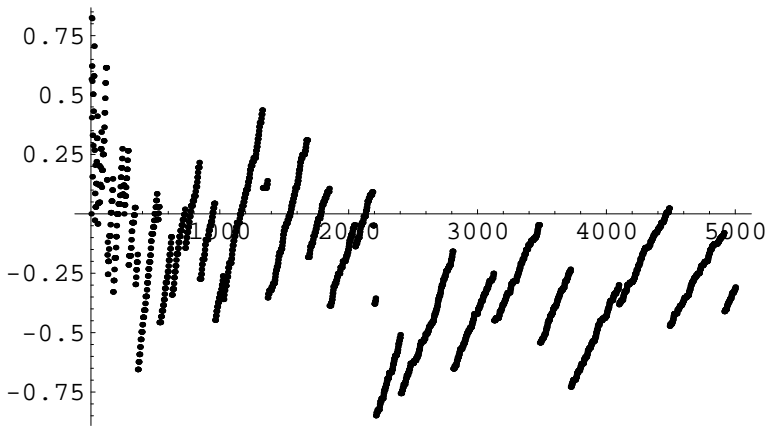}
\caption{A plot of the function $\eta_3(x)$.}
\end{figure}

In this section, one finds that the Riemann prime counting function \cite{Weinstein} $R(x)=\sum_{n=1}^\infty \frac{\mu(n)}{n} \mbox{li}(x^{1/n}) \sim \pi(x)$ may be much improved by replacing it by $R[\psi(x)]$. One denotes $\eta_N(x)=\sum_{n=1}^N \frac{\mu(n)}{n}\mbox{li}[\psi(x)^{1/n}]-\pi(x)$, $N\ge 1$, the offset in the new prime counting function. Indeed, $\eta_1(x)=\epsilon_{\psi(x)}$.

\begin{table}[ht]
\caption{Upper part of the table: maximum error $\eta_{\mbox{max}}$ in the new prime counting function for $x<10^4$ (left hand part) in comparison to the maximum error using the Riemann prime counting function (right hand part).
Lower part of the table: as above in the range $x<10^5$.}\label{table1}
\renewcommand\arraystretch{1.5}
\noindent\[
\begin{array}{|r|r|r|r|r|}
\hline
N  & x_{\mbox{max}} & \eta_{\mbox{max}} & x_{\mbox{max}} & (R-\pi)_{\mbox{max}}\\
%$N$  & $x~$~max & $\eta$ max & $x~$ max & $(R-\pi)$ max\\
\hline
$3$  & $6889$  &  $1.118$     & $7450$   &$6.174$   \\
$4$  & $6889$  &  $1.118$  & $7450$   &$6.174$ \\
$5$  & $1330$  &  $-1.061 $  & $9859$   &$-5.506$ \\
$6$  &  $7$   &  $ -0.862$  & $7450$   &$5.879$  \\
$7$  &  $1330$ &  $-0.936$   & $9949$   &$-5.609$ \\
$10$ &  $7$    & $-0.884$ & $7450$   &$5.661$ \\
$50$ & $1330$  & $-0.885$ & $9949$   &$-5.557$ \\
\hline
$3$  & $80090$ & $1.840$     & $87888$   &$15.304$   \\
$10$ & $49727$ & $-1.158$     & $59797$   &$-15.729$   \\
\hline
%8& $>$ 500, 000&$>10^{3199194}$\\\hline
\end{array}
\]
\end{table}

\begin{table}[ht]
\caption{Gauss's and Riemann's approximation and the approximation $\eta_3(x)$. Compare table III, p. 35 in \cite{Edwards74}.}\label{table3}
\renewcommand\arraystretch{1.5}
\noindent\[
\begin{array}{|r|r|r|r|}
\hline
x  & \mbox{Planat $\&$ Sol\'e}~\mbox{error} & \mbox{Riemann's}~ \mbox{error}& \mbox{Gauss's}~\mbox{ error}\\
%$N$  & $x~$~max & $\eta$ max & $x~$ max & $(R-\pi)$ max\\
\hline
1,000,000  & 0.79 &  30     & 130     \\
2,000,000  & -0.13 & -8.0     & 121     \\
3,000,000  & 1.83 &  1.8     & 121     \\
4,000,000  & 1.28 &  35     & 130     \\
5,000,000  & 0.36 &  -62     & 121     \\
6,000,000  & 2.91 &  25     & 121     \\
7,000,000  & 0.03 &  -36     & 130     \\
8,000,000  & 2.99 &  -4.7     & 121     \\
9,000,000  & 1.73 &  -51     & 121     \\
10,000,000  & -0.37 &    90  &    339  \\
\hline
%8& $>$ 500, 000&$>10^{3199194}$\\\hline
\end{array}
\]
\end{table}

By definition, the negative jumps in the function $\eta_N(x)$ may only occur at $x+1 \in \mathcal{P}$. For $N=1$, they occur at primes $p \in \mathcal{\mbox{Ch}}$ (the Chebyshev primes: see definition \ref{def1}). For $N>1$, negative jumps are numerically found to occur at all $x+1 \in \mathcal{P}$ with an amplitude decreasing to zero. We are led to the conjecture

\begin{conjecture}\label{jumpseta1}
Let $\eta_N(x)=\sum_{n=1}^N \frac{\mu(n)}{n}\mbox{li}[\psi(x)^{1/n}]-\pi(x)$, $N >1$. Negative jumps of the function $\eta_N(x)$ occur at all primes $x+1 \in \mathcal{P}$ and $\mbox{lim}_{p \rightarrow \infty}[\eta_N(p)-\eta_N(p-1)]=0$.
\end{conjecture}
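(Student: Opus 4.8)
The plan is to compute the jump $\Delta_N(p):=\eta_N(p)-\eta_N(p-1)$ exactly and then isolate its leading behaviour. Since $p$ is prime, $\psi(p)-\psi(p-1)=\Lambda(p)=\log p$ and $\pi(p)-\pi(p-1)=1$, so
\[
\Delta_N(p)=\sum_{n=1}^N\frac{\mu(n)}{n}\Bigl(\mbox{li}[\psi(p)^{1/n}]-\mbox{li}[\psi(p-1)^{1/n}]\Bigr)-1.
\]
First I would peel off the $n=1$ term, which is exactly $j_{\psi(p)}-1$ (the quantity governing the Chebyshev primes), and treat the terms with $n\ge 2$ separately. For each fixed $n$, the mean value theorem applied to $y\mapsto\mbox{li}(y^{1/n})$, whose derivative is $y^{1/n-1}/\log y$, together with $\psi(p)\sim p$, shows that the $n$-th increment equals $\frac{\mu(n)}{n}\,p^{1/n-1}(1+o(1))$.

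The limit statement is the easy half and should be unconditional. By the prime number theorem $\psi(p)\sim p$, so $j_{\psi(p)}=\log p/\log\xi\to1$ for $\xi\in(\psi(p-1),\psi(p))$; and for $n\ge2$ the increment obeys $\psi(p)^{1/n}-\psi(p-1)^{1/n}\le\frac1n\psi(p-1)^{1/n-1}\log p\to0$ by concavity, while $1/\log t$ stays bounded on the relevant range, so every term with $n\ge2$ tends to $0$. Hence $\Delta_N(p)\to1+0-1=0$, which is the second assertion.

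The sign statement is the hard half, and here I would make the two competing quantities precise. A Taylor expansion of $\int_{\psi(p-1)}^{\psi(p)}dt/\log t$ gives $j_{\psi(p)}-1=-\frac{\psi(p)-p}{p\log p}(1+o(1))$, while the dominant correction among the terms $n\ge2$ is the one with $n=2$, namely $-\tfrac12[\mbox{li}(\sqrt{\psi(p)})-\mbox{li}(\sqrt{\psi(p-1)})]\sim-\frac1{2\sqrt p}$, the terms with $n\ge3$ contributing only $O(p^{-2/3})$. Hence
\[
\Delta_N(p)=-\frac{\psi(p)-p}{p\log p}(1+o(1))-\frac{1}{2\sqrt p}(1+o(1)).
\]
When $\psi(p)\ge p$ both terms are negative and $\Delta_N(p)<0$ is immediate (this is essentially the Chebyshev regime, cf. Proposition \ref{pnpnminus1}). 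When $\psi(p)<p$ the first term is positive, and the negativity of the jump becomes equivalent to the oscillation bound $p-\psi(p)<\tfrac12\sqrt p\,\log p$ at the prime $p$.

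The main obstacle is exactly this one-sided bound on $\psi(p)-p$. Under RH one has only $|\psi(x)-x|<\frac1{8\pi}\sqrt x\log^2x$, which is a factor $\log p$ too weak to force $p-\psi(p)<\tfrac12\sqrt p\,\log p$; thus RH by itself does not decide the sign for $p$ beyond roughly $e^{4\pi}$. What is really needed is that the extreme negative excursions of $\psi(x)-x$ at primes be $o(\sqrt x\log x)$, which is consistent with Littlewood's $\Omega_\pm$ theorem and with the conjectured true order $\sqrt x\,\log_3 x$, but lies beyond current unconditional or RH-conditional technology. I would therefore present the limit assertion and the case $\psi(p)\ge p$ as theorems, reduce the remaining case to this fluctuation estimate, and dispose of the finitely many small primes (where the asymptotics are not yet sharp) by direct computation, as the numerics up to $10^5$ already indicate.
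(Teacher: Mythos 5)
You should first be aware that the paper does not prove this statement at all: it is presented as a conjecture, supported only by the numerical observation that for $N>1$ negative jumps appear at every prime with amplitude tending to zero (plus the plot of $\eta_3$ and the error tables). So there is no paper proof to compare against, and your proposal must be judged as an independent attempt --- and on that footing it is essentially sound, and it delivers strictly more than the paper does. The decomposition of $\Delta_N(p)$, the derivative $\frac{d}{dy}\mbox{li}(y^{1/n})=y^{1/n-1}/\log y$, and the resulting size $\frac{\mu(n)}{n}p^{1/n-1}(1+o(1))$ of the $n$-th increment are all correct, so your proof of the limit assertion is complete and unconditional. Your reduction of the sign assertion to the one-sided bound $p-\psi(p)<\tfrac12\sqrt{p}\,\log p$ is also correct, and it is genuinely illuminating: it explains the dichotomy the paper only observes numerically, namely that for $N=1$ the $-\tfrac{1}{2\sqrt p}$ term is absent and the sign is governed by $\psi(p)-p$ itself (whence the Chebyshev primes and Proposition \ref{pnpnminus1}), whereas for $N\ge 2$ that term forces negativity at every prime unless $\psi(x)-x$ has an abnormally large negative excursion at $x=p$. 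Your assessment of the obstruction is accurate as well: Schoenfeld's RH bound $\tfrac{1}{8\pi}\sqrt{x}\log^2 x$ is a factor of order $\log p$ too large to close the gap, and an $o(\sqrt{x}\log x)$ bound on the negative excursions of $\psi(x)-x$, while expected, is open.

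Two caveats. First, your intermediate claim $j_{\psi(p)}-1=-\frac{\psi(p)-p}{p\log p}(1+o(1))$ silently assumes $|\psi(p)-p|\gg\log p$; the exact expansion carries an extra $+\frac{1}{2p}$ term, because the integration interval is centered at $\psi(p)-\tfrac12\log p$ rather than at $\psi(p)$ (this midpoint is exactly the one appearing in the paper's Conjecture \ref{newdef}). Since $\frac{1}{2p}=o(1/\sqrt{p})$ this does not affect your displayed formula for $\Delta_N(p)$ or the sign analysis, but the intermediate statement as written is not correct at primes where $\psi(p)-p$ is small. Second, you should state plainly that the outcome is not a proof of the conjecture: you prove its second half and establish an exact asymptotic equivalent for the first half, reducing it to an open fluctuation estimate. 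That is the honest and, given current technology, best possible resolution --- and it is presumably why the authors left the statement as a conjecture.
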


More generally, the jumps of $\eta_N(x)$ at power of primes are described by the following 

\begin{conjecture}\label{jumpseta2}
Let $\eta_N(x)$ be as in conjecture \ref{jumpseta1}. Positive jumps of the function $\eta_N(x)$ occur at all power of primes $x+1=p^l$, $p \in \mathcal{P}$ and $l>1$.  Moreover, the jumps are such that 
$\eta_N(p^l)-\eta_N(p^l-1)-1/l>0$ and  $\mbox{lim}_{p \rightarrow \infty}\eta_N(p^l)-\eta_N(p^l-1)-1/l=0$
\end{conjecture}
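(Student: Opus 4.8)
The plan is to convert the jump at a prime power $p^l$ into a short Möbius-weighted sum of logarithmic-integral increments and to estimate it term by term with the mean value theorem, isolating the main term $1/l$ and bounding the remainder.

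First I would record the two facts that make the prime-power jumps special. Because $l>1$, the integer $p^l$ is not prime, so $\pi(p^l)=\pi(p^l-1)$ and the $\pi$ contribution to the jump disappears; and because $\Lambda(p^l)=\log p$, the Chebyshev function satisfies $\psi(p^l)-\psi(p^l-1)=\log p$ (exactly as at a genuine prime). Setting $u=\psi(p^l)$ and $v=\psi(p^l-1)=u-\log p$, the quantity to study is
$$\eta_N(p^l)-\eta_N(p^l-1)=\sum_{n=1}^N\frac{\mu(n)}{n}\Big(\mbox{li}(u^{1/n})-\mbox{li}(v^{1/n})\Big).$$
For each $n$ I would apply the mean value theorem twice, once to $\mbox{li}$ and once to $t\mapsto t^{1/n}$, obtaining $\mbox{li}(u^{1/n})-\mbox{li}(v^{1/n})=\frac{\log p}{n\,\xi_n^{\,1-1/n}\log c_n}$ for suitable $c_n,\xi_n$ between $v$ and $u$. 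Since $\psi(x)\sim x$ one has $u,v\sim p^l$, hence $\log c_n\sim l\log p$ and $\xi_n^{1-1/n}\sim p^{l(n-1)/n}$; the $n=1$ term tends to $\frac{\log p}{l\log p}=\frac1l$, while each $n\ge2$ term is $O\!\big(p^{-l(n-1)/n}\big)$, dominated by $n=2$ of size $O(p^{-l/2})$. Summing the finitely many terms gives $\eta_N(p^l)-\eta_N(p^l-1)=\frac1l+O(p^{-l/2})$, which at once yields both that the jump is positive for all large prime powers and that $\lim_{p\to\infty}\big(\eta_N(p^l)-\eta_N(p^l-1)-1/l\big)=0$.

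The remaining, and genuinely hard, point is the strict inequality $\eta_N(p^l)-\eta_N(p^l-1)>1/l$ for \emph{every} prime power. This hinges on the sign of the full sub-leading correction, where two effects compete. The $n=1$ correction equals $\log p\big(\tfrac{1}{\log c_1}-\tfrac{1}{\log p^l}\big)$ with $c_1$ essentially the midpoint $\psi_0(p^l)=\tfrac12(\psi(p^l)+\psi(p^l-1))$, so by conjecture \ref{defMan} its sign is positive precisely when $p$ is \emph{not} a generalized Chebyshev prime of index $l$ (that is, when $\psi_0(p^l)<p^l$) and negative otherwise; under RH this correction has size $O(p^{-l/2}/\log p)$. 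The $n=2$ term, however, is $-\tfrac12\big(\mbox{li}(u^{1/2})-\mbox{li}(v^{1/2})\big)$, which is \emph{unconditionally negative} and of the larger size $O(p^{-l/2})$. Thus, to obtain the stated strict sign one must show that after summing all terms the positive contributions outweigh this negative $n=2$ term uniformly in $p$ and $l$, and the fact that the $n=2$ term is the larger of the two is exactly what makes the claimed direction delicate. I expect this to be the main obstacle: it requires effective, sign-sensitive estimates for $\psi(p^l)-p^l$ at prime powers, going beyond the bound $|\psi(x)-x|=O(x^{1/2+\epsilon_0})$ used earlier, combined with the explicit relation between $\psi_0(p^l)-p^l$ and the nontrivial zeros $\rho$ through the von Mangoldt formula. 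A realistic route is therefore to verify the inequality directly for prime powers below a computable bound and to establish the asymptotic sign for large $p^l$ from an effective form of the explicit formula; reconciling that asymptotic sign with the dominant negative $n=2$ contribution is the crux on which the whole statement turns.
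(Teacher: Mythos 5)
First, a calibration point: the paper does not prove this statement at all --- it is stated as a conjecture, supported only by the plot of $\eta_3$ in Fig.~2 and the surrounding numerical remarks. So there is no proof in the paper to compare yours against, and your proposal must stand on its own. The part of your argument that is solid is the reduction and the asymptotics: at $x+1=p^l$ with $l>1$ the function $\pi$ does not move, $\psi$ jumps by $\Lambda(p^l)=\log p$, the mean value theorem gives the $n=1$ term $\to 1/l$ and the $n\ge 2$ terms $O(p^{-l(n-1)/n})$, whence $\eta_N(p^l)-\eta_N(p^l-1)=1/l+O(p^{-l/2})$. This proves the limit claim and positivity of the jump for all large prime powers, which is already more than the paper establishes.

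The trouble is the strict inequality, and here the situation is worse than a gap you could not close: your own estimates show that the stated sign should typically be wrong, and a direct check confirms that it is. The dominant correction is the unconditionally negative $n=2$ term, of size about $-p^{-l/2}/(2l)$; the only positive contribution available, the $n=1$ correction $\approx (p^l-\psi_0(p^l))/(l^2 p^l \log p)$, exceeds it only when $p^l-\psi_0(p^l) > \tfrac{1}{2}\sqrt{p^l}\,\log (p^l)$. Since $|\psi(x)-x|$ is typically of order $\sqrt{x}\log_3 x$ (Littlewood), far below $\sqrt{x}\log x$, the jump should lie \emph{below} $1/l$ at most prime powers. Numerically: with $N=3$, $\psi(8)=3\log 2+\log 3+\log 5+\log 7\approx 6.733$ and $\psi(9)\approx 7.832$, so the jump at $9$ is $\approx 0.554-0.103-0.049=0.402<1/2$, i.e. $\eta_3(9)-\eta_3(8)-1/2\approx -0.098<0$; the same happens at $25$ (jump $\approx 0.44$). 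Moreover the paper's own Definition \ref{general} says the generalized Chebyshev primes of index $l$ are exactly the $p$ for which the $N=1$ jump at $p^l$ is $<1/l$, and it lists many of them (e.g. $17$ for $l=2$); passing from $N=1$ to $N>1$ essentially only subtracts further, so the inequality fails there as well. So instead of framing the strict inequality as a hard open crux requiring sign-sensitive control of $\psi(p^l)-p^l$, your analysis should have led you to conclude that the conjecture, as literally written for $N>1$, is false in its inequality part --- the defensible content is that the jumps are positive, close to $1/l$, with $\eta_N(p^l)-\eta_N(p^l-1)-1/l\to 0$. One smaller slip: under RH the $n=1$ correction is $O(p^{-l/2}\log p)$, not $O(p^{-l/2}/\log p)$; the latter (up to a $\log_3$ factor) is its typical Littlewood size, not an RH upper bound.
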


A sketch of the function $\eta_3(n)$ (for $2<n<1500$) is given in Fig. 2. One easily detects the large  positive jumps at $n=p^2$ ($p \in \mathcal{P}$), the intermediate positive jumps at $n=p^l$ ($l>2$), and the (very small) negative jumps at primes $p$. This plot can be compared to that of the function $R(n)-\pi(n)$ displayed in \cite{Weinstein}.

%{\bf Observation 1:}
\begin{comment}
The arithmetical structure of $\eta_N(x)$ just described leads to $|\eta_N(x)|<\eta_{\mbox{max}}$ when $N \ge 3$. Table 1 represents the maximum value $\eta_{\mbox{max}}$ that is reached and the position $x_{\mbox{max}}$ of the extremum, for several small values of $N$ and $x<10^5$. Thus, the function $\sum_{n=1}^N \frac{\mu(n)}{n}\mbox{li}[\psi(x)^{1/n}]$ is a good prime counting function with only a few terms in the summation. 
This is about a fivefold improvement of the accuracy obtained with the standard Riemann prime counting function $R(x)$ (in the range $x<10^4$) and an even better improvement when $x>10^4$, already with three terms in the expansion. 
Another illustration of the efficiency of the calculation based on $\mbox{li}[\psi(x)]$ is given in Table \ref{table3}, that displays values of $\eta_3(x)$ at multiples of $10^6$.

It is known that $R(x)$ converges for any $x$ and may also be written as the Gram series \cite{Weinstein} $R(x)=1+\sum_{k=1^\infty} \frac{(\log x)^k}{k! k \zeta(k+1)}$. A similar formula is not established here.
\end{comment}

\section*{Conclusion} This work sheds light on the structure and the distribution of the generalized Chebyshev primes $\mbox{Ch}_n^{l}$ arising from the jumps of the function $\mbox{li}[\psi(x)]$. It is inspired by Robin's work \cite{Robin84} relating the sign of the functions $\epsilon_{\theta(x)}$ and $\epsilon_{\psi(x)}$ to RH \cite{Robin84}.  Our most puzzling observation is that the non-trivial zeros $\rho$ of the Riemann zeta function are mirrored  in the (generalized) Chebyshev primes, whose existence at infinity crucially depends on the Littlewood's oscillation theorem. In addition, a new accurate prime counting function, based on $\mbox{li}[\psi(x)]$ has been proposed. Future work should concentrate on an effective analytic map between the the zeros $\rho$ and the sequence $\mbox{Ch}_n$, in the spirit of our conjecture \ref{defMan}, and of our approach of RH through the Riemann primes.

\bibliographystyle{amsplain}

%\end{document}

\subsection*{The Chebyshev primes (of index $1$) not exceeding $10^5$}
\tiny

[ 109, 113, 139, 181, 197, 199, 241, 271, 281, 283, 293, 313, 317, 443, 449, 461, 463, 467, 479,
491, 503, 509, 523, 619, 643, 647, 653, 659, 661, 673, 677, 683, 691, 701, 761, 769, 773, 829, 859,
863, 883, 887, 1033, 1039, 1049, 1051, 1061, 1063, 1069, 1091, 1093, 1097, 1103, 1109, 1117, 1123,
1129, 1153, 1231, 1237, 1301, 1303, 1307, 1319, 1321, 1327, 1489, 1493, 1499, 1511, 1571, 1579,
1583, 1601, 1607, 1609, 1613, 1619, 1621, 1627, 1637, 1657, 1663, 1667, 1669, 1693, 1697, 1699,
1709, 1721, 1723, 1733, 1741, 1747, 1753, 1759, 1783, 1787, 1789, 1801, 1811, 1877, 1879, 1889,
1907, 1913, 2089, 2113, 2141, 2143, 2153, 2161, 2297, 2311, 2351, 2357, 2381, 2383, 2389, 2393,
2399, 2411, 2417, 2423, 2437, 2441, 2447, 2459, 2467, 2473, 2477, 2557, 2711, 2713, 2719, 2729,
2731, 2741, 2749, 2753, 2767, 2777, 2789, 2791, 2797, 2801, 2803, 2819, 2833, 2837, 2843, 2851,
2857, 2861, 2879, 2887, 2897, 2903, 2909, 2917, 2927, 2939, 2953, 2957, 2963, 2969, 2971, 3001,
3011, 3019, 3023, 3037, 3041, 3049, 3061, 3067, 3083, 3089, 3559, 3583, 3593, 3617, 3623, 3631,
3637, 3643, 3659, 3673, 3677, 3691, 3697, 3701, 3709, 3719, 3727, 3733, 3739, 3761, 3767, 3769,
3779, 3793, 3797, 3803, 3823, 3833, 3851, 3853, 3863, 3877, 3881, 3889, 3911, 3917, 3919, 3923,
3929, 3931, 3943, 3947, 3967, 4007, 4013, 4019, 4021, 4027, 4049, 4051, 4057, 4073, 4079, 4091,
4093, 4099, 4111, 4127, 4129, 4133, 4139, 4153, 4157, 4159, 4177, 4219, 4231, 4241, 4243, 4253,
4259, 4261, 4271, 4273, 4283, 4289, 4297, 4327, 4339, 4349, 4357, 4363, 4373, 4519, 4523, 4549,
4567, 4651, 4657, 4663, 4673, 4679, 4691, 4733, 4801, 4817, 5009, 5011, 5021, 5023, 5039, 5051,
5059, 5081, 5087, 5099, 5101, 5107, 5113, 5119, 5237, 5507, 5521, 5527, 5531, 5573, 5581, 5591,
5659, 5669, 5693, 5701, 5711, 5717, 5743, 5749, 5827, 5843, 5849, 5851, 5857, 5861, 5867, 5869,
5879, 5881, 5897, 5903, 5923, 5927, 5939, 5953, 6271, 6277, 6287, 6299, 6301, 6311, 6317, 6323,
6329, 6337, 6343, 6353, 6359, 6361, 6367, 6373, 6379, 6389, 6397, 6421, 6427, 6449, 6451, 6469,
6473, 6481, 6491, 6571, 6577, 6581, 6599, 6607, 6619, 6703, 6709, 6719, 6733, 6737, 6793, 6803,
6829, 6833, 6841, 6857, 6863, 6869, 6871, 6883, 6899, 6907, 6911, 6917, 6949, 6959, 6961, 6967,
6971, 6977, 6983, 6991, 6997, 7001, 7013, 7019, 7027, 7039, 7043, 7057, 7069, 7079, 7103, 7109,
7121, 7127, 7129, 7151, 7159, 7213, 7219, 7229, 7237, 7243, 7247, 7253, 7333, 7351, 7583, 7589,
7591, 7603, 7607, 7621, 7643, 7649, 7673, 7681, 7687, 7691, 7699, 7703, 7717, 7723, 7727, 7741,
7753, 7757, 7759, 7789, 7793, 7879, 7883, 7927, 7933, 7937, 7949, 7951, 7963, 8297, 8839, 8849,
8863, 8867, 8887, 8893, 9013, 9049, 9059, 9067, 9241, 9343, 9349, 9419, 9421, 9431, 9433, 9437,
9439, 9461, 9463, 9467, 9473, 9479, 9491, 9497, 9511, 9521, 9533, 9539, 9547, 9551, 9587, 9601,
9613, 9619, 9623, 9629, 9631, 9643, 9649, 9661, 9677, 9679, 9689, 9697, 9719, 9721, 9733, 9739,
9743, 9749, 9767, 9769, 9781, 9787, 9791, 9803, 9811, 9817, 9829, 9833, 9839, 9851, 9857, 9859,
9871, 9883, 9887, 9901, 9907, 9923, 9929, 9931, 9941, 9949, 9967, 9973, 10007, 10009, 10039, 10069,
10079, 10091, 10093, 10099, 10103, 10111, 10133, 10139, 10141, 10151, 10159, 10163, 10169, 10177,
10181, 10193, 10211, 10223, 10243, 10247, 10253, 10259, 10267, 10271, 10273, 10289, 10301, 10303,
10313, 10321, 10331, 10333, 10337, 10343, 10357, 10369, 10391, 10399, 10427, 10429, 10433, 10453,
10457, 10459, 10463, 10477, 10487, 10499, 10501, 10513, 10529, 10531, 10559, 10567, 10601, 10607,
10613, 10627, 10631, 10639, 10651, 10657, 10663, 10667, 10687, 10691, 10709, 10711, 10723, 10729,
10733, 10739, 10753, 10771, 10781, 10789, 10799, 10859, 10861, 10867, 10883, 10889, 10891, 10903,
10909, 10939, 10949, 10957, 10979, 10987, 10993, 11003, 11119, 11173, 11177, 12547, 12553, 12577,
12583, 12589, 12601, 12611, 12613, 12619, 12637, 12641, 12647, 12653, 12659, 12671, 12689, 12697,
12703, 12713, 12721, 12739, 12743, 12757, 12763, 12781, 12791, 12799, 12809, 12821, 12823, 12829,
12841, 12853, 12911, 12917, 12919, 12923, 12941, 12953, 12959, 12967, 12973, 12979, 12983, 13001,
13003, 13007, 13009, 13033, 13037, 13043, 13049, 13063, 13093, 13099, 13103, 13109, 13121, 13127,
13147, 13151, 13159, 13163, 13171, 13177, 13183, 13187, 13217, 13219, 13229, 13241, 13249, 13259,
13267, 13291, 13297, 13309, 13313, 13327, 13331, 13337, 13339, 13367, 13381, 13399, 13411, 13417,
13421, 13441, 13451, 13457, 13463, 13469, 13477, 13487, 13499, 13513, 13523, 13537, 13697, 13709,
13711, 13721, 13723, 13729, 13751, 13757, 13759, 13763, 13781, 13789, 13799, 13807, 13829, 13831,
13841, 13859, 13877, 13879, 13883, 13901, 13903, 13907, 13913, 13921, 13931, 13933, 13963, 13967,
13999, 14009, 14011, 14029, 14033, 14051, 14057, 14071, 14081, 14083, 14087, 14107, 14783, 14831,
14851, 14869, 14879, 14887, 14891, 14897, 14947, 14951, 14957, 14969, 14983, 15289, 15299, 15307,
15313, 15319, 15329, 15331, 15349, 15359, 15361, 15373, 15377, 15383, 15391, 15401, 15413, 15427,
15439, 15443, 15451, 15461, 15467, 15473, 15493, 15497, 15511, 15527, 15541, 15551, 15559, 15569,
15581, 15583, 15601, 15607, 15619, 15629, 15641, 15643, 15647, 15649, 15661, 15667, 15671, 15679,
15683, 15727, 15731, 15733, 15737, 15739, 15749, 15761, 15767, 15773, 15787, 15791, 15797, 15803,
15809, 15817, 15823, 15859, 15877, 15881, 15887, 15889, 15901, 15907, 15913, 15919, 15923, 15937,
15959, 15971, 15973, 15991, 16001, 16007, 16033, 16057, 16061, 16063, 16067, 16069, 16073, 16087,
16091, 16097, 16103, 16111, 16127, 16139, 16141, 16183, 16187, 16189, 16193, 16217, 16223, 16229,
16231, 16249, 16253, 16267, 16273, 16301, 16319, 16333, 16339, 16349, 16361, 16363, 16369, 16381,
16411, 16417, 16421, 16427, 16433, 16447, 16451, 16453, 16477, 16481, 16487, 16493, 16519, 16529,
16547, 16553, 16561, 16567, 16573, 16607, 16633, 16651, 16657, 16661, 16673, 16691, 16693, 16699,
16703, 16729, 16741, 16747, 16759, 16763, 17033, 17041, 17047, 17053, 17123, 17207, 17209, 17393,
17401, 17419, 17431, 17449, 17471, 17477, 17483, 17489, 17491, 17497, 17509, 17519, 17539, 17551,
17569, 17573, 17579, 17581, 17597, 17599, 17609, 17623, 17627, 17657, 17659, 17669, 17681, 17683,
17713, 17749, 17761, 17791, 17929, 17939, 17959, 17977, 17981, 17987, 17989, 18013, 18047, 18049,
18059, 18061, 18077, 18089, 18097, 18121, 18127, 18131, 18133, 18143, 18149, 18169, 18181, 18191,
18199, 18211, 18217, 18223, 18229, 18233, 18251, 18253, 18257, 18269, 18287, 18289, 18301, 18307,
18311, 18313, 18329, 18341, 18353, 18367, 18371, 18379, 18397, 18401, 18413, 18427, 18433, 18439,
18443, 18451, 18457, 18461, 18481, 18493, 18503, 18517, 18521, 18523, 18539, 18541, 18553, 18583,
18587, 18593, 18617, 18637, 18661, 18671, 18679, 18691, 18701, 18713, 18719, 18731, 18743, 18749,
18757, 18773, 18787, 18793, 18797, 18803, 20149, 20177, 20183, 20407, 20411, 20443, 21601, 21611,
21613, 21617, 21649, 21661, 21821, 21841, 21851, 21859, 21863, 21871, 21881, 21893, 21911, 22039,
22051, 22067, 22073, 22079, 22091, 22093, 22109, 22111, 22123, 22129, 22133, 22147, 22153, 22157,
22159, 22171, 22189, 22193, 22229, 22247, 22259, 22271, 22273, 22277, 22279, 22283, 22291, 22303,
22307, 22343, 22349, 22367, 22369, 22381, 22391, 22397, 22409, 22433, 22441, 22447, 22453, 22469,
22481, 22483, 22501, 22511, 22543, 22549, 22567, 22571, 22573, 22643, 22651, 22697, 22699, 22709,
22717, 22721, 22727, 22739, 22741, 22751, 22769, 22777, 22783, 22787, 22807, 22811, 22817, 22853,
22859, 22861, 22871, 22877, 22901, 22907, 22921, 22937, 22943, 22961, 22963, 22973, 22993, 23003,
23011, 23017, 23021, 23027, 23029, 23039, 23041, 23053, 23057, 23059, 23063, 23071, 23081, 23087,
23099, 23117, 23131, 23143, 23159, 23167, 23173, 23189, 23197, 23201, 23203, 23209, 23227, 23251,
23269, 23279, 23291, 23293, 23297, 23311, 23321, 23327, 23333, 23339, 23357, 23369, 23371, 23399,
23417, 23431, 23447, 23459, 23473, 23563, 23567, 23581, 23593, 23599, 23603, 23609, 23623, 23627,
23629, 23633, 23663, 23669, 23671, 23677, 23687, 23689, 23719, 23741, 23743, 23747, 23753, 23761,
23767, 23773, 23789, 23801, 23813, 23819, 23827, 23831, 23833, 23857, 23869, 23873, 23879, 23887,
23893, 23899, 23909, 23911, 23917, 23929, 23957, 23971, 23977, 23981, 23993, 24001, 24007, 24019,
24023, 24029, 24043, 24049, 24061, 24071, 24077, 24083, 24091, 24097, 24103, 24107, 24109, 24113,
24121, 24133, 24137, 24151, 24169, 24179, 24181, 24197, 24203, 24223, 24229, 24239, 24247, 24251,
24281, 24317, 24329, 24337, 24359, 24371, 24373, 24379, 24391, 24407, 24413, 24419, 24421, 24439,
24443, 24469, 24473, 24481, 24499, 24509, 24517, 24527, 24533, 24547, 24551, 24571, 24593, 24611,
24623, 24631, 24659, 24671, 24677, 24683, 24691, 24697, 24709, 24733, 24749, 24763, 24767, 24781,
24793, 24799, 24809, 24821, 24851, 24859, 24877, 24923, 24977, 24979, 24989, 25037, 25171, 25189,
25261, 25373, 25463, 25469, 25471, 25951, 26003, 26021, 26029, 26041, 26053, 26263, 26267, 26297,
26317, 26321, 26339, 26347, 26357, 26399, 26407, 26417, 26423, 26431, 26437, 26449, 26459, 26479,
26489, 26497, 26501, 26513, 26713, 26717, 26723, 26729, 26731, 26737, 26759, 26777, 26783, 26801,
26813, 26821, 26833, 26839, 26849, 26861, 26863, 26879, 26881, 26891, 26893, 26903, 26921, 26927,
26947, 26951, 26953, 26959, 26981, 26987, 26993, 27011, 27017, 27031, 27043, 27059, 27061, 27067,
27073, 27077, 27091, 27103, 27107, 27109, 27127, 27143, 27179, 27191, 27197, 27211, 27241, 27259,
27271, 27277, 27281, 27283, 27299, 27823, 27827, 27851, 27961, 27967, 28001, 28031, 28099, 28109,
28111, 28123, 28627, 28631, 28643, 28649, 28657, 28661, 28663, 28669, 28687, 28697, 28703, 28711,
28723, 28729, 28751, 28753, 28759, 28771, 28789, 28793, 28807, 28813, 28817, 28837, 28843, 28859,
28867, 28871, 28879, 28901, 28909, 28921, 28927, 28933, 28949, 28961, 28979, 29009, 29017, 29021,
29023, 29027, 29033, 29059, 29063, 29077, 29101, 29131, 29137, 29147, 29153, 29167, 29173, 29179,
29191, 29201, 29207, 29209, 29221, 29231, 29243, 29251, 29269, 29287, 29297, 29303, 29311, 29327,
29333, 29339, 29347, 29363, 29383, 29387, 29389, 29399, 29401, 29411, 29423, 29429, 29437, 29443,
29453, 29473, 29483, 29501, 29527, 29531, 29537, 29569, 29573, 29581, 29587, 29599, 29611, 29629,
29633, 29641, 29663, 29669, 29671, 29683, 29761, 31277, 31337, 31397, 32611, 32621, 32653, 32803,
32843, 32999, 33029, 33037, 33049, 33053, 33071, 33073, 33083, 33091, 33107, 33113, 33119, 33151,
33161, 33179, 33181, 33191, 33199, 33203, 33211, 33223, 33247, 33349, 33353, 33359, 33377, 33391,
33403, 33409, 33413, 33427, 33469, 33479, 33487, 33493, 33503, 33521, 33529, 33533, 33547, 33563,
33569, 33577, 33581, 33587, 33589, 33599, 33601, 33613, 33617, 33619, 33623, 33629, 33637, 33641,
33647, 33679, 33703, 33713, 33721, 33739, 33749, 33751, 33757, 33767, 33769, 33773, 33791, 33797,
33809, 33811, 33827, 33829, 33851, 33857, 33863, 33871, 33889, 33893, 33911, 33923, 33931, 33937,
33941, 33961, 33967, 33997, 34019, 34031, 34033, 34039, 34057, 34061, 34127, 34129, 34141, 34147,
34157, 34159, 34171, 34183, 34211, 34213, 34217, 34231, 34259, 34261, 34267, 34273, 34283, 34297,
34301, 34303, 34313, 34319, 34327, 34337, 34351, 34361, 34367, 34369, 34381, 34403, 34421, 34429,
34439, 34457, 34469, 34471, 34483, 34487, 34499, 34501, 34511, 34513, 34519, 34537, 34543, 34549,
34583, 34589, 34591, 34603, 34607, 34613, 34631, 34649, 34651, 34667, 34673, 34679, 34687, 34693,
34703, 34721, 34729, 34739, 34747, 34757, 34759, 34763, 34781, 34807, 34819, 34841, 34843, 34847,
34849, 34871, 34877, 34883, 34897, 34913, 34919, 34939, 34949, 34961, 34963, 34981, 35023, 35027,
35051, 35053, 35059, 35069, 35081, 35083, 35089, 35099, 35107, 35111, 35117, 35129, 35141, 35149,
35153, 35159, 35171, 35201, 35221, 35227, 35251, 35257, 35267, 35279, 35281, 35291, 35311, 35317,
35323, 35327, 35339, 35353, 35363, 35381, 35393, 35401, 35407, 35419, 35423, 35437, 35447, 35449,
35461, 35491, 35507, 35509, 35521, 35527, 35531, 35533, 35537, 35543, 35569, 35573, 35591, 35593,
35597, 35603, 35617, 35671, 35677, 36011, 36013, 36017, 36037, 36073, 36083, 36109, 36137, 36697,
36709, 36713, 36721, 36739, 36749, 36761, 36767, 36779, 36781, 36787, 36791, 36793, 36809, 36821,
36833, 36847, 36857, 36871, 36877, 36887, 36899, 36901, 36913, 36919, 36923, 36929, 36931, 36943,
36947, 36973, 36979, 36997, 37003, 37013, 37019, 37021, 37039, 37049, 37057, 37061, 37087, 37097,
37117, 37123, 37139, 37159, 37171, 37181, 37189, 37199, 37201, 37217, 37223, 37243, 37253, 37273,
37277, 37307, 37309, 37313, 37321, 37337, 37339, 37357, 37361, 37363, 37369, 37379, 37397, 37409,
37423, 37441, 37447, 37463, 37483, 37489, 37493, 37501, 37507, 37511, 37517, 37529, 37537, 37547,
37549, 37561, 37567, 37571, 37573, 37579, 37589, 37591, 37607, 37619, 37633, 37643, 37649, 37657,
37663, 37691, 37693, 37699, 37717, 37747, 37781, 37783, 37799, 37811, 37813, 37831, 37847, 37853,
37861, 37871, 37879, 37889, 37897, 37907, 37951, 37957, 37963, 37967, 37987, 37991, 37993, 37997,
38011, 38039, 38047, 38053, 38069, 38083, 38239, 38303, 38321, 38327, 38329, 38333, 38351, 38371,
38377, 38393, 38461, 38933, 39163, 39217, 39227, 39229, 39233, 39239, 39241, 39251, 39301, 39313,
39317, 39323, 39341, 39343, 39359, 39367, 39371, 39373, 39383, 39397, 39409, 39419, 39439, 39443,
39451, 39461, 39503, 39509, 39511, 39521, 39541, 39551, 39563, 39569, 39581, 39623, 39631, 39679,
39841, 39847, 39857, 39863, 39869, 39877, 39883, 39887, 39901, 39929, 39937, 39953, 39971, 39979,
39983, 39989, 40009, 40013, 40031, 40037, 40039, 40063, 40099, 40111, 40123, 40127, 40129, 40151,
 40153, 40163, 40169, 40177, 40189, 40193, 40213, 40231, 40237, 40241,
40253, 40277, 40283, 40289, 41269, 41281, 41651, 41659, 41669, 41681, 41687, 41957, 41959, 41969,
41981, 41983, 41999, 42013, 42017, 42019, 42023, 42043, 42061, 42071, 42073, 42083, 42089, 42101,
42131, 42139, 42157, 42169, 42179, 42181, 42187, 42193, 42197, 42209, 42221, 42223, 42227, 42239,
42257, 42281, 42283, 42293, 42299, 42307, 42323, 42331, 42337, 42349, 42359, 42373, 42379, 42391,
42397, 42403, 42407, 42409, 42433, 42437, 42443, 42451, 42457, 42461, 42463, 42467, 42473, 42487,
42491, 42499, 42509, 42533, 42557, 42569, 42571, 42577, 42589, 42611, 42641, 42643, 42649, 42667,
42677, 42683, 42689, 42697, 42701, 42703, 42709, 42719, 42727, 42737, 42743, 42751, 42767, 42773,
42787, 42793, 42797, 42821, 42829, 42839, 42841, 42853, 42859, 42863, 42899, 42901, 42923, 42929,
42937, 42943, 42953, 42961, 42967, 42979, 42989, 43003, 43013, 43019, 43037, 43049, 43051, 43063,
43067, 43093, 43103, 43117, 43133, 43151, 43159, 43177, 43189, 43201, 43207, 43223, 43237, 43261,
43271, 43283, 43291, 43313, 43319, 43321, 43331, 43391, 43397, 43399, 43403, 43411, 43427, 43441,
43451, 43457, 43481, 43487, 43499, 43517, 43541, 43543, 43577, 43579, 43591, 43597, 43607, 43609,
43613, 43627, 43633, 43649, 43651, 43661, 43669, 43691, 43711, 43717, 43721, 43753, 43759, 43777,
43781, 43783, 43787, 43789, 43793, 43801, 43853, 43867, 43891, 43973, 43987, 43991, 43997, 44017,
44021, 44027, 44029, 44041, 44053, 44059, 44071, 44087, 44089, 44101, 44111, 44119, 44123, 44129,
44131, 44159, 44171, 44179, 44189, 44201, 44203, 44207, 44221, 44249, 44257, 44263, 44267, 44269,
44273, 44279, 44281, 44293, 44351, 44357, 44371, 44381, 44383, 44389, 44417, 44453, 44501, 44507,
44519, 44531, 44533, 44537, 44543, 44549, 44563, 44579, 44587, 44617, 44621, 44623, 44633, 44641,
44647, 44651, 44657, 44683, 44687, 44699, 44701, 44711, 44729, 44741, 44753, 44771, 44773, 44777,
44789, 44797, 44809, 44819, 44839, 44843, 44851, 44867, 44879, 44887, 44893, 44909, 44917, 44927,
44939, 44953, 44959, 44963, 44971, 44983, 44987, 45007, 45013, 45053, 45061, 45077, 45083, 45121,
45127, 45131, 45137, 45139, 45161, 45179, 45181, 45191, 45197, 45233, 45263, 45289, 45293, 45307,
45317, 45319, 45329, 45337, 45341, 45343, 45361, 45377, 45389, 45403, 45413, 45427, 45433, 45439,
45503, 45557, 45589, 45599, 45707, 50119, 50123, 50129, 50131, 50147, 50153, 50159, 50177, 50207,
50227, 50231, 50291, 50333, 50341, 50359, 50363, 50377, 50383, 50387, 50411, 50417, 50423, 50441,
50461, 50551, 50593, 50599, 51613, 51647, 51679, 51683, 51691, 51719, 51721, 51829, 51839, 51853,
51859, 51869, 51871, 51893, 51899, 51907, 51913, 51929, 51941, 51949, 51971, 51973, 51977, 51991,
52009, 52021, 52027, 52057, 52067, 52069, 52081, 52103, 52183, 52189, 52201, 52223, 52253, 52259,
52267, 52291, 52301, 52313, 52321, 52391, 53173, 53201, 53239, 53281, 54563, 54581, 54583, 54601,
54617, 54623, 54629, 54631, 54647, 54667, 54673, 54679, 54713, 54721, 54727, 54779, 54787, 54799,
55933, 55949, 56519, 56527, 56531, 56533, 56543, 56569, 56599, 56611, 56633, 56671, 56681, 56687,
56701, 56711, 56713, 56731, 56737, 56747, 56767, 56773, 56779, 56783, 56807, 56809, 56813, 56821,
56827, 56843, 56857, 56873, 56891, 56893, 56897, 56909, 56911, 56921, 56923, 56929, 56941, 56951,
56957, 56963, 56983, 56989, 56993, 56999, 57037, 57041, 57047, 57059, 57073, 57077, 57089, 57097,
57107, 57119, 57131, 57139, 57143, 57149, 57163, 57173, 57179, 57191, 57193, 57203, 57221, 57223,
57241, 57251, 57259, 57269, 57271, 57283, 57287, 57301, 57329, 57331, 57347, 57349, 57367, 57373,
57383, 57389, 57397, 57413, 57427, 57457, 57467, 57487, 57493, 57503, 57527, 57529, 57557, 57559,
57571, 57587, 57593, 57601, 57637, 57641, 57649, 57653, 57667, 57679, 57689, 57697, 57709, 57713,
57719, 57727, 57731, 57737, 57751, 57773, 57781, 57787, 57791, 57793, 57803, 57809, 57829, 57839,
57847, 57853, 57859, 57881, 57899, 57901, 57917, 57923, 57943, 57947, 57973, 57977, 57991, 58013,
58027, 58031, 58043, 58049, 58057, 58061, 58067, 58073, 58099, 58109, 58111, 58129, 58147, 58151,
58153, 58169, 58171, 58189, 58193, 58199, 58207, 58211, 58217, 58229, 58231, 58237, 58243, 58271,
58309, 58313, 58321, 58337, 58363, 58367, 58369, 58379, 58391, 58393, 58403, 58411, 58417, 58427,
58439, 58441, 58451, 58453, 58477, 58481, 58511, 58537, 58543, 58549, 58567, 58573, 58579, 58601,
58603, 58613, 58631, 58657, 58661, 58679, 58687, 58693, 58699, 58711, 58727, 58733, 58741, 58757,
58763, 58771, 58787, 58789, 58831, 58889, 58897, 58901, 58907, 58909, 58913, 58921, 58937, 58943,
58963, 58967, 58979, 58991, 58997, 59009, 59011, 59021, 59023, 59029, 59051, 59053, 59063, 59069,
59077, 59083, 59093, 59107, 59113, 59119, 59123, 59141, 59149, 59159, 59167, 59183, 59197, 59207,
59209, 59219, 59221, 59233, 59239, 59243, 59263, 59273, 59281, 59333, 59341, 59351, 59357, 59359,
59369, 59377, 59387, 59393, 59399, 59407, 59417, 59419, 59441, 59443, 59447, 59453, 59467, 59471,
59473, 59497, 59509, 59513, 59539, 59557, 59561, 59567, 59581, 59611, 59617, 59621, 59627, 59629,
59651, 59659, 59663, 59669, 59671, 59693, 59699, 59707, 59723, 59729, 59743, 59747, 59753, 59771,
59779, 59791, 59797, 59809, 59833, 59863, 59879, 59887, 59921, 59929, 59951, 59957, 59971, 59981,
59999, 60013, 60017, 60029, 60037, 60041, 60077, 60083, 60089, 60091, 60101, 60103, 60107, 60127,
60133, 60139, 60149, 60161, 60167, 60169, 60209, 60217, 60223, 60251, 60257, 60259, 60271, 60289,
60293, 60317, 60331, 60337, 60343, 60353, 60373, 60383, 60397, 60413, 60427, 60443, 60449, 60457,
60493, 60497, 60509, 60521, 60527, 60539, 60589, 60601, 60607, 60611, 60617, 60623, 60631, 60637,
60647, 60649, 60659, 60661, 60679, 60689, 60703, 60719, 60727, 60733, 60737, 60757, 60761, 60763,
60773, 60779, 60793, 60811, 60821, 60859, 60869, 60887, 60889, 60899, 60901, 60913, 60917, 60919,
60923, 60937, 60943, 60953, 60961, 61001, 61007, 61027, 61031, 61043, 61051, 61057, 61091, 61099,
61121, 61129, 61141, 61151, 61153, 61169, 61211, 61223, 61231, 61253, 61261, 61283, 61291, 61297,
61333, 61339, 61343, 61357, 61363, 61379, 61381, 61403, 61409, 61417, 61441, 61469, 61471, 61483,
61487, 61493, 61507, 61511, 61519, 61543, 61547, 61553, 61559, 61561, 61583, 61603, 61609, 61613,
61627, 61631, 61637, 61643, 61651, 61657, 61667, 61673, 61681, 61687, 61703, 61717, 61723, 61729,
61751, 61757, 61781, 61813, 61819, 61837, 61843, 61861, 61871, 61879, 61909, 61927, 61933, 61949,
61961, 61967, 61979, 61981, 61987, 61991, 62003, 62011, 62017, 62039, 62047, 62053, 62057, 62071,
62081, 62099, 62119, 62129, 62131, 62137, 62141, 62143, 62171, 62189, 62191, 62201, 62207, 62213,
62219, 62233, 62273, 62297, 62299, 62303, 62311, 62323, 62327, 62347, 62351, 62383, 62401, 62417,
62423, 62477, 62483, 62497, 62501, 62507, 62533, 62539, 62549, 62563, 62597, 62603, 62617, 62627,
62633, 62639, 62653, 62659, 62683, 62687, 62701, 62731, 62753, 62761, 62773, 62989, 63499, 63533,
63541, 63599, 63601, 63607, 63611, 63617, 63629, 63647, 63649, 63659, 63667, 63671, 63689, 63691,
63697, 63703, 63709, 63719, 63727, 63737, 63743, 63761, 63773, 63781, 63793, 63799, 63803, 63809,
63823, 63839, 63841, 63853, 63857, 63863, 63901, 63907, 63913, 63929, 63949, 63977, 63997, 64007,
64013, 64019, 64033, 64037, 64063, 64067, 64081, 64091, 64109, 64123, 64151, 64153, 64157, 64171,
64187, 64189, 64217, 64223, 64231, 64237, 64271, 64279, 64283, 64301, 64303, 64319, 64327, 64333,
65713, 65717, 65719, 65729, 65731, 65761, 65777, 65789, 65837, 65839, 65843, 65851, 65867, 65881,
65899, 65929, 65957, 65963, 65981, 65983, 65993, 66047, 66071, 66083, 66089, 66103, 66107, 66109,
66137, 66173, 66179, 66191, 67531, 67537, 67547, 67559, 67567, 67577, 67579, 67589, 67601, 67607,
67619, 67631, 67651, 67759, 67763, 67777, 67783, 67789, 67801, 67807, 67819, 67829, 67843, 67853,
67867, 67883, 67891, 67901, 67927, 67931, 67933, 67939, 67943, 67957, 67961, 67967, 67979, 67987,
67993, 68023, 68041, 68053, 68059, 68071, 68087, 68099, 68111, 68113, 68141, 68147, 68161, 68171,
68209, 68213, 68219, 68227, 68239, 68261, 68281, 71483, 71999, 72047, 72053, 72077, 72089, 72091,
72101, 72103, 72109, 72139, 72167, 72169, 72173, 72223, 72227, 72229, 72251, 72253, 72269, 72271,
72277, 72287, 72307, 72313, 72337, 72341, 72353, 72367, 72379, 72383, 72421, 72431, 72497, 72503,
72679, 72689, 72701, 72707, 72719, 72727, 72733, 72739, 72763, 72767, 72911, 72923, 72931, 72937,
72949, 72953, 72959, 72973, 72977, 72997, 73009, 73013, 73019, 73037, 73039, 73043, 73061, 73063,
73079, 73091, 73121, 73127, 73133, 73141, 73181, 73189, 74611, 74623, 74779, 74869, 74873, 74887,
74891, 74897, 74903, 74923, 74929, 74933, 74941, 74959, 75037, 75041, 75403, 75407, 75431, 75437,
75577, 75583, 75619, 75629, 75641, 75653, 75659, 75679, 75683, 75689, 75703, 75707, 75709, 75721,
75731, 75743, 75767, 75773, 75781, 75787, 75793, 75797, 75821, 75833, 75853, 75869, 75883, 75991,
75997, 76001, 76003, 76031, 76039, 76103, 76159, 76163, 76261, 77563, 77569, 77573, 77587, 77591,
77611, 77617, 77621, 77641, 77647, 77659, 77681, 77687, 77689, 77699, 77711, 77713, 77719, 77723,
77731, 77743, 77747, 77761, 77773, 77783, 77797, 77801, 77813, 77839, 77849, 77863, 77867, 77893,
77899, 77929, 77933, 77951, 77969, 77977, 77983, 77999, 78007, 78017, 78031, 78041, 78049, 78059,
78079, 78101, 78139, 78167, 78173, 78179, 78191, 78193, 78203, 78229, 78233, 78241, 78259, 78277,
78283, 78301, 78307, 78311, 78317, 78341, 78347, 78367, 79907, 80239, 80251, 80263, 80273, 80279,
80287, 80317, 80329, 80341, 80347, 80363, 80369, 80387, 80687, 80701, 80713, 80749, 80779, 80783,
80789, 80803, 80809, 80819, 80831, 80833, 80849, 80863, 80911, 80917, 80923, 80929, 80933, 80953,
80963, 80989, 81001, 81013, 81017, 81019, 81023, 81031, 81041, 81043, 81047, 81049, 81071, 81077,
81083, 81097, 81101, 81119, 81131, 81157, 81163, 81173, 81181, 81197, 81199, 81203, 81223, 81233,
81239, 81281, 81283, 81293, 81299, 81307, 81331, 81343, 81349, 81353, 81359, 81371, 81373, 81401,
81409, 81421, 81439, 81457, 81463, 81509, 81517, 81527, 81533, 81547, 81551, 81553, 81559, 81563,
81569, 81611, 81619, 81629, 81637, 81647, 81649, 81667, 81671, 81677, 81689, 81701, 81703, 81707,
81727, 81737, 81749, 81761, 81769, 81773, 81799, 81817, 81839, 81847, 81853, 81869, 81883, 81899,
81901, 81919, 81929, 81931, 81937, 81943, 81953, 81967, 81971, 81973, 82003, 82007, 82009, 82013,
82021, 82031, 82037, 82039, 82051, 82067, 82073, 82129, 82139, 82141, 82153, 82163, 82171, 82183,
82189, 82193, 82207, 82217, 82219, 82223, 82231, 82237, 82241, 82261, 82267, 82279, 82301, 82307,
82339, 82349, 82351, 82361, 82373, 82387, 82393, 82421, 82457, 82463, 82469, 82471, 82483, 82487,
82493, 82499, 82507, 82529, 82531, 82549, 82559, 82561, 82567, 82571, 82591, 82601, 82609, 82613,
82619, 82633, 82651, 82657, 82699, 82721, 82723, 82727, 82729, 82757, 82759, 82763, 82781, 82787,
82793, 82799, 82811, 82813, 82837, 82847, 82883, 82889, 82891, 82903, 82913, 82939, 82963, 82981,
82997, 83003, 83009, 83023, 83047, 83059, 83063, 83071, 83077, 83089, 83093, 83101, 83117, 83137,
83177, 83203, 83207, 83219, 83221, 83227, 83231, 83233, 83243, 83257, 83267, 83269, 83273, 83299,
83311, 83339, 83341, 83357, 83383, 83389, 83399, 83401, 83407, 83417, 83423, 83431, 83437, 83443,
83449, 83459, 83471, 83477, 83497, 83537, 83557, 83561, 83563, 83579, 83591, 83597, 83609, 83617,
83621, 83639, 83641, 83653, 83663, 83689, 83701, 83717, 83719, 83737, 83761, 83773, 83777, 83791,
83813, 83833, 83843, 83857, 83869, 83873, 83891, 83903, 83911, 83921, 83933, 83939, 83969, 83983,
83987, 84011, 84017, 84061, 84067, 84089, 84143, 84181, 84191, 84199, 84211, 84221, 84223, 84229,
84239, 84247, 84263, 84313, 84317, 84319, 84347, 84349, 84391, 84401, 84407, 84421, 84431, 84437,
84443, 84449, 84457, 84463, 84467, 84481, 84499, 84503, 84509, 84521, 84523, 84533, 84551, 84559,
84589, 84629, 84631, 84649, 84653, 84659, 84673, 84691, 84697, 84701, 84713, 84719, 84731, 84737,
84751, 84761, 84787, 84793, 84809, 84811, 84827, 84857, 84859, 84869, 84871, 84919, 85093, 85103,
85109, 85121, 85133, 85147, 85159, 85201, 85213, 85223, 85229, 85237, 85243, 85247, 85259, 85303,
85313, 85333, 85369, 85381, 85451, 85453, 85469, 85487, 85531, 85621, 85627, 85639, 85643, 85661,
85667, 85669, 85691, 85703, 85711, 85717, 85733, 85751, 85837, 85843, 85847, 85853, 85889, 85909,
85933, 86297, 86353, 86357, 86369, 86371, 86381, 86389, 86399, 86413, 86423, 86441, 86453, 86461,
86467, 86477, 86491, 86501, 86509, 86531, 86533, 86539, 86561, 86573, 86579, 86587, 86599, 86627,
86629, 87641, 87643, 87649, 87671, 87679, 87683, 87691, 87697, 87701, 87719, 87721, 87739, 87743,
87751, 87767, 87793, 87797, 87803, 87811, 87833, 87853, 87869, 87877, 87881, 87887, 87911, 87917,
87931, 87943, 87959, 87961, 87973, 87977, 87991, 88001, 88003, 88007, 88019, 88037, 88069, 88079,
88093, 88117, 88129, 90023, 90031, 90067, 90071, 90073, 90089, 90107, 90121, 90127, 90191, 90197,
90199, 90203, 90217, 90227, 90239, 90247, 90263, 90271, 90281, 90289, 90313, 90373, 90379, 90397,
90401, 90403, 90407, 90437, 90439, 90473, 90481, 90499, 90511, 90523, 90527, 90529, 90533, 90547,
90647, 90659, 90679, 90703, 90709, 91459, 91463, 92419, 92431, 92467, 92489, 92507, 92671, 92681,
92683, 92693, 92699, 92707, 92717, 92723, 92737, 92753, 92761, 92767, 92779, 92789, 92791, 92801,
92809, 92821, 92831, 92849, 92857, 92861, 92863, 92867, 92893, 92899, 92921, 92927, 92941, 92951,
92957, 92959, 92987, 92993, 93001, 93047, 93053, 93059, 93077, 93083, 93089, 93097, 93103, 93113,
93131, 93133, 93139, 93151, 93169, 93179, 93187, 93199, 93229, 93239, 93241, 93251, 93253, 93257,
93263, 93281, 93283, 93287, 93307, 93319, 93323, 93329, 93337, 93371, 93377, 93383, 93407, 93419,
93427, 93463, 93479, 93481, 93487, 93491, 93493, 93497, 93503, 93523, 93529, 93553, 93557, 93559,
93563, 93581, 93601, 93607, 93629, 93637, 93683, 93701, 93703, 93719, 93739, 93761, 93763, 93787,
93809, 93811, 93827, 93851, 93893, 93901, 93911, 93913, 93923, 93937, 93941, 93949, 93967, 93971,
93979, 93983, 93997, 94007, 94009, 94033, 94049, 94057, 94063, 94079, 94099, 94109, 94111, 94117,
94121, 94151, 94153, 94169, 94201, 94207, 94219, 94229, 94253, 94261, 94273, 94291, 94307, 94309,
94321, 94327, 94331, 94343, 94349, 94351, 94379, 94397, 94399, 94421, 94427, 94433, 94439, 94441,
94447, 94463, 94477, 94483, 94513, 94529, 94531, 94541, 94543, 94547, 94559, 94561, 94573, 94583,
94597, 94603, 94613, 94621, 94649, 94651, 94687, 94693, 94709, 94723, 94727, 94747, 94771, 94777,
94781, 94789, 94793, 94811, 94819, 94823, 94837, 94841, 94847, 94849, 94873, 94889, 94903, 94907,
94933, 94949, 94951, 94961, 94993, 94999, 95003, 95009, 95021, 95027, 95063, 95071, 95083, 95087,
95089, 95093, 95101, 95107, 95111, 95131, 95143, 95153, 95177, 95189, 95191, 95203, 95213, 95219,
95231, 95233, 95239, 95257, 95261, 95267, 95273, 95279, 95287, 95311, 95317, 95327, 95339, 95369,
95383, 95393, 95401, 95413, 95419, 95429, 95441, 95443, 95461, 95467, 95471, 95479, 95483, 95507,
95527, 95531, 95539, 95549, 95561, 95569, 95581, 95597, 95603, 95617, 95621, 95629, 95633, 95651,
95701, 95707, 95713, 95717, 95723, 95731, 95737, 95747, 95773, 95783, 95789, 95791, 95801, 95803,
95813, 95819, 95857, 95869, 95873, 95881, 95891, 95911, 95917, 95923, 95929, 95947, 95957, 95959,
95971, 95987, 95989, 96001, 96013, 96017, 96043, 96053, 96059, 96079, 96097, 96137, 96149, 96157,
96167, 96179, 96181, 96199, 96211, 96221, 96223, 96233, 96259, 96263, 96269, 96281, 96289, 96293,
96323, 96329, 96331, 96337, 96353, 96377, 96401, 96419, 96431, 96443, 96451, 96457, 96461, 96469,
96479, 96487, 96493, 96497, 96517, 96527, 96553, 96557, 96581, 96587, 96589, 96601, 96643, 96661,
96667, 96671, 96697, 96703, 96731, 96737, 96739, 96749, 96757, 96763, 96769, 96779, 96787, 96797,
96799, 96821, 96823, 96827, 96847, 96851, 96857, 96893, 96907, 96911, 96931, 96953, 96959, 96973,
96979, 96989, 96997, 97001, 97003, 97007, 97021, 97039, 97073, 97081, 97103, 97117, 97127, 97151,
97157, 97159, 97169, 97171, 97177, 97187, 97213, 97231, 97241, 97259, 97283, 97301, 97303, 97327,
97367, 97369, 97373, 97379, 97381, 97387, 97397, 97423, 97429, 97441, 97453, 97459, 97463, 97499,
97501, 97511, 97523, 97547, 97549, 97553, 97561, 97571, 97577, 97579, 97583, 97607, 97609, 97613,
97649, 97651, 97673, 97687, 97711, 97729, 97771, 97777, 97787, 97789, 97813, 97829, 97841, 97843,
97847, 97849, 97859, 97861, 97871, 97879, 97883, 97919, 97927, 97931, 97943, 97961, 97967, 97973,
97987, 98009, 98011, 98017, 98041, 98047, 98057, 98081, 98101, 98123, 98129, 98143, 98179, 98227,
98327, 98389, 98411, 98419, 98429, 98443, 98453, 98459, 98467, 98473, 98479, 98491, 98507, 98519,
98533, 98543, 98561, 98563, 98573, 98597, 98627, 98639, 98641, 98663, 98669, 98717, 98729, 98731,
98737, 98779, 98809, 98893, 98897, 98899, 98909, 98911, 98927, 98929, 98939, 98947, 98953, 98963,
98981, 98993, 98999, 99013, 99017, 99023, 99041, 99053, 99079, 99083, 99089, 99103, 99109, 99119,
99131, 99133, 99137, 99139, 99149, 99173, 99181, 99191, 99223, 99233, 99241, 99251, 99257, 99259,
99277, 99289, 99317, 99347, 99349, 99367, 99371, 99377, 99391, 99397, 99401, 99409, 99431, 99439,
99469, 99487, 99497, 99523, 99527, 99529, 99551, 99559, 99563, 99571, 99577, 99581, 99607, 99611,
99623, 99643, 99661, 99667, 99679, 99689, 99707, 99709, 99713, 99719, 99721, 99733, 99761, 99767,
99787, 99793, 99809, 99817, 99823, 99829, 99833, 99839, 99859, 99871, 99877, 99881, 99901, 99907,
99923, 99929, 99961, 99971, 99989, 99991 ]

\subsection*{The Riemann primes of the $\psi$-type and index $1$, in the range $p_n=[2\ldots1286451]$}
\small
[ 2, 59, 73, 97, 109, 113, 199, 283, 463, 467, 661, 1103, 1109, 1123, 1129, 1321, 1327, 1423, 2657,
2803, 2861, 3299, 5381, 5881, 6373, 6379, 9859, 9931, 9949, 10337, 10343, 11777, 19181, 19207,
19373, 24107, 24109, 24113, 24121, 24137, 42751, 42793, 42797, 42859, 42863, 58231, 58237, 58243,
59243, 59447, 59453, 59471, 59473, 59747, 59753,142231, 142237, 151909, 152851, 152857, 152959,
152993, 153001, 155851, 155861, 155863, 155893, 175573, 175601, 175621, 230357, 230369, 230387,
230389, 230393, 298559, 298579, 298993, 299281, 299311, 299843, 299857, 299933, 300073, 300089,
300109, 300137, 302551, 302831, 355073, 355093, 355099, 355109, 355111, 463157, 463181, 617479,
617731, 617767, 617777, 617801, 617809, 617819, 909907, 909911, 909917, 910213, 910219, 910229,
993763, 993779, 993821, 1062251, 1062293, 1062311, 1062343, 1062469, 1062497, 1062511, 1062547,
1062599, 1062643, 1062671, 1062779, 1062869, 1090681, 1090697, 1194041, 1194047, 1194059, 1195237,
1195247 ]

\subsection*{The Riemann primes of the $\theta$-type and index $1$, in the range $p_n=[2\ldots1536517]$}
\small
[ 2, 5, 7, 11, 17, 29, 37, 41, 53, 59, 97, 127, 137, 149, 191, 223, 307, 331, 337, 347, 419, 541,
557, 809, 967, 1009, 1213, 1277, 1399, 1409, 1423, 1973, 2203, 2237, 2591, 2609, 2617, 2633, 2647,
2657, 3163, 3299, 4861, 4871, 4889, 4903, 4931, 5381, 7411, 7433, 7451, 8513, 8597, 11579, 11617,
11657, 11677, 11777, 14387, 18973, 19001, 19031, 19051, 19069, 19121, 19139, 19181, 19207, 19373,
27733, 30089, 30631, 31957, 32051, 46439, 47041, 47087, 47111, 47251, 47269, 55579, 55603, 64849,
64997, 69109, 69143, 69191, 69337, 69371, 69623, 69653, 69677, 69691, 69737, 69761, 69809, 69821,
69991, 88589, 88643, 88771, 88789, 114547, 114571, 115547, 115727, 119489, 119503, 119533, 119549,
166147, 166541, 166561, 168433, 168449, 168599, 168673, 168713, 168851, 168977, 168991, 169307,
169627, 175391, 175573, 175601, 175621, 237673, 237851, 237959, 264731, 288137, 288179, 288647,
293599, 293893, 293941, 293957, 293983, 295663, 295693, 295751, 295819, 298153, 298559, 298579,
298993, 299261, 299281, 299311, 299843, 299857, 299933, 300073, 300089, 300109, 300137, 302551,
302831, 406969, 407023, 407047, 407083, 407119, 407137, 407177, 461917, 461957, 461971, 462013,
462041, 462067, 462239, 462263, 462307, 462361, 462401, 463093, 463157, 463181, 642673, 642701,
642737, 642769, 643369, 643403, 643421, 643847, 678157, 745931, 747199, 747259, 747277, 747319,
747361, 747377, 747391, 747811, 747827, 748169, 748183, 748199, 748441, 750599, 750613, 750641,
757241, 757993, 982559, 983063, 983113, 984241, 984299, 987793, 987911, 987971, 989059, 989119,
989171, 989231, 989623, 989743, 993679, 993763, 993779, 993821, 1061561, 1062169, 1062197, 1062251,
1062293, 1062311, 1062343, 1062469, 1062497, 1062511, 1062547, 1062599, 1062643, 1062671, 1062779,
1062869, 1090373, 1090681, 1090697 ]

%
%\begin{figure}
%\centering
%\includegraphics[]{champispsi.eps}
%\caption{The function $(\psi(x)-x)/\sqrt{x}$ at the champions of the function $\psi(x)-x$, $x$ an integer.}
%\label{champspsiix}
%\end{figure}
%

%
%\begin{figure}
%\centering
%\includegraphics[]{champistheta.eps}
%\caption{The function $(\theta(x)-x)/(\frac{1}{8 \pi}\sqrt{x}\log^2 x)$ at the champions of the function $\theta(x)-x$, $x$ an integer. }
%\label{champsthetax}
%\end{figure}
%

\end{document}